\newtheorem{theorem}{Theorem}
\newtheorem{proposition}[theorem]{Proposition}
\newtheorem{lemma}[theorem]{Lemma}
\newtheorem{conjecture}[theorem]{Conjecture}
\newtheorem{observation}[theorem]{Observation}
\newtheorem{claim}{Claim}
\newtheorem{question}[theorem]{Question}
\newcommand{\eps}{\varepsilon}
\newcommand{\Goldberg}{Gol\kern-0.35mm\char39\kern-0.35mm dberg}
\DeclareMathOperator{\ch}{ch}
\newcommand{\MK}{\mathcal{K}}
\newcommand{\qitem}[1]{\noindent\leavevmode\hangindent1.5\parindent%
       \noindent\hbox to1.5\parindent{#1\hss}\ignorespaces}
\tikzstyle{vertex}=[inner sep = 0pt, minimum width=4pt, fill=black,
\tikzstyle{xblue}=[dotted, very thick]
\tikzstyle{xred}=[decorate, decoration={snake, segment length=.2cm,
\newcommand{\gpoint}[2]{\node[style=vertex, label=#1:$#2$]}
\newcommand{\bpoint}[1]{\gpoint{below}{#1}}
\newcommand{\apoint}[1]{\gpoint{above}{#1}}
\title{Extension from Precoloured Sets of Edges}
\author{Katherine Edwards\,\thanks{\,Mathematics of Networks Department,
    Nokia Bell Labs, Murray Hill, NJ, USA;
    \texttt{katherine.edwards2@gmail.com}.}
  \and Ant\'onio Gir\~ao\,\thanks{\,Department of Pure Mathematics and
    Mathematical Statistics, University of Cambridge, Cambridge, UK;
    \texttt{A.Girao@dpmms.cam.ac.uk}.}
  \and Jan van den Heuvel\,\thanks{\,Department of Mathematics, London
    School of Economics and Political Science, London, UK;
    \texttt{j.van-den-heuvel@lse.ac.uk}.}
  \and Ross J. Kang\,\thanks{\,Department of Mathematics, Radboud
    University, Nijmegen, Netherlands;
    \texttt{ross.kang@gmail.com}. Supported by Veni (639.031.138) and Vidi (639.032.614)
    grants of the Netherlands Organisation for Scientific Research (NWO).}
  \and Gregory J. Puleo\,\thanks{\,Department of Mathematics and Statistics, Auburn University, Auburn, AL, USA;
    \texttt{gjp0007@auburn.edu}.}
  \and Jean-S\'ebastien Sereni\,\thanks{\,CNRS, CSTB (ICube),
    Strasbourg, France; \texttt{sereni@kam.mff.cuni.cz}.
    This author's work was partially supported by \emph{Agence Nationale de
      la Recherche} under references \textsc{anr 10 jcjc 0204 01} and
    \textsc{anr 13 BS02 0007}.}}
\begin{document}

\maketitle
\begin{abstract}
  \noindent
  We consider precolouring extension problems for proper edge-colourings of
  graphs and multigraphs, in an attempt to prove stronger versions of
  Vizing's and Shannon's bounds on the chromatic index of (multi)graphs in
  terms of their maximum degree~$\Delta$. We are especially interested in
  the following question: when is it possible to extend a precoloured
  matching to a colouring of all edges of a (multi)graph? This question
  turns out to be related to the notorious List Colouring Conjecture
  and other classic notions of choosability.
\end{abstract}

\section{Introduction}

Let $G = (V,E)$ be a (multi)graph and let $\MK=[K]=\{1,\dots,K\}$ be a
palette of available colours. (In this paper, a \emph{multigraph} can have
multiple edges, but no loops; while a \emph{graph} is always simple.) We
consider the following question: given a subset $S\subseteq E$ of edges and
a proper colouring of elements of~$S$ (i.e., adjacent edges must receive
distinct colours) using only colours from~$\MK$, is there a proper
colouring of all edges of~$G$ (again using only colours from~$\MK$) in
concordance with the given colouring on~$S$? We may consider the set~$S$ as
a set of \emph{precoloured} edges, while the full colouring, if it exists,
may be considered as \emph{extending} the precolouring. If the set~$S$
forms a matching in~$G$, then the precolouring of~$S$ may be arbitrary
from~$\MK$.

An early appearance of a problem regarding precolouring extension of
edge-colourings can be found in Marcotte and Seymour~\cite{MS90}.
Note also that the completion of partial Latin squares can be interpreted as an edge-precolouring extension problem restricted to complete bipartite graphs, and this has been studied since as early as 1960, cf.~e.g.,~\cite{Sme81}.
Nevertheless, in general the edge-precolouring extension problem has been less comprehensively studied than its vertex-colouring counterpart. 
We hope to provoke interest in
edge-precolouring extension and in 
the following question especially.

\begin{question}\label{question:main}\mbox{}\\*
  Let~$G$ be a multigraph with maximum degree $\Delta(G)$ and maximum
  multiplicity $\mu(G)$, let $\MK=[\Delta(G)+\mu(G)]$, and let~$M$ be a
  matching of~$G$ precoloured from the palette~$\MK$. What conditions
  on~$G$ and~$M$ ensure that the precolouring of $M$ extends to a proper
  $\MK$-edge-colouring of all of~$G$?
\end{question}

\noindent
The obvious relationship between edge-precolouring and its vertex
counterpart --- in which we can see edge-precolouring extension of~$G$ as
vertex-precolouring extension in its line graph $L(G)$ --- yields immediate
implications. For us, the distance between two edges in~$G$ is their
corresponding distance in $L(G)$, i.e., the number of \emph{vertices}
contained in a shortest path in~$G$ between any of their end-vertices. A
\emph{distance-$t$ matching} is a set of edges having pairwise distance
greater than $t$. (This means that a matching is a distance-$1$ matching,
while an induced matching is a distance-$2$ matching. Any set of edges is a
distance-$0$ matching.) We point out the following consequence of a result
of Albertson \cite[Thm.~4]{Alb98} (see Subsection~\ref{sub:background}) and
Vizing's theorem.

\begin{proposition}\label{prop:albertson1}\mbox{}\\*
  Let~$G$ be a multigraph with maximum degree $\Delta(G)$ and maximum
  multiplicity $\mu(G)$. Using the palette $\MK=[\Delta(G)+\mu(G)+1]$, any
  precoloured distance-$3$ matching can be extended to a proper
  edge-colouring of all of~$G$.
\end{proposition}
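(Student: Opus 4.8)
The plan is to reduce the statement to vertex-precolouring extension in the line graph $L(G)$ and then to combine Albertson's Theorem~4 \cite{Alb98} with Vizing's theorem. First I would record that, by the multigraph form of Vizing's theorem (the Vizing--Gupta bound; see \cite{Viz64,Gup74}), $\chi'(G)\le\Delta(G)+\mu(G)$, so that $\chi\bigl(L(G)\bigr)=\chi'(G)\le r$, where $r:=\Delta(G)+\mu(G)$; in other words $L(G)$ is $r$-colourable. Next, under the identification of $E(G)$ with $V\bigl(L(G)\bigr)$, a precoloured distance-$3$ matching $S$ of $G$ becomes a set of precoloured vertices of $L(G)$ whose pairwise distances in $L(G)$ are all at least $4$: this is immediate from the conventions fixed above, since a distance-$t$ matching consists of edges at pairwise (edge-)distance greater than $t$, and the distance between two edges of $G$ is by definition their distance in $L(G)$. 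Finally, I would apply Albertson's Theorem~4 to the graph $L(G)$: since $L(G)$ is $r$-colourable and the precoloured vertices lie at pairwise distance at least $4$, any precolouring of them with colours from $[r+1]$ extends to a proper $(r+1)$-colouring of $L(G)$. Translating back, this is exactly a proper edge-colouring of $G$ from the palette $[\Delta(G)+\mu(G)+1]$ agreeing with the given colouring on $S$, as desired.

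Since the two quoted theorems do all of the real work, the proof is essentially bookkeeping, and the only points that require attention are: (i) using the correct multigraph form of Vizing's theorem, so that the base colourability of $L(G)$ is $\Delta(G)+\mu(G)$ rather than something larger; and (ii) verifying that the distance-$3$ condition in $G$ translates precisely to the pairwise-distance-$\ge 4$ hypothesis that Albertson's theorem demands in $L(G)$. This last point also makes transparent why Proposition~\ref{prop:albertson1} is weaker than Conjecture~\ref{conj:main} in the two advertised senses: it uses one extra colour ($\Delta+\mu+1$ in place of $\Delta+\mu$), and it requires the precoloured edges to be much more spread out (a distance-$3$ matching rather than an arbitrary matching). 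I do not foresee any genuine difficulty beyond these checks; in particular, the main obstacle is purely one of reconciling the distance bookkeeping between $G$ and $L(G)$ with the hypothesis of Albertson's theorem, after which the conclusion is immediate.
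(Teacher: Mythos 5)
Your proposal is correct and is exactly the argument the paper intends: the paper derives Proposition~\ref{prop:albertson1} by applying Albertson's theorem to the $(\Delta(G)+\mu(G))$-colourable graph $L(G)$ (colourability via the Vizing--Gupta bound), noting that a distance-$3$ matching in $G$ is, by the paper's convention, a set of vertices of $L(G)$ at pairwise distance at least~$4$. The distance bookkeeping you flag as the only delicate point is indeed the only thing to check, and you have checked it correctly.
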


\noindent
Albertson and Moore~\cite[Conj.~1]{AlMo01} conjectured that when $G$ is a
simple graph, any precoloured distance-$3$ matching can be extended to a
proper edge-colouring of $G$ using the palette $\MK=[\Delta(G)+1]$. We
propose a stronger conjecture.

\begin{conjecture}\label{conj:main}\mbox{}\\*
  Let~$G$ be a multigraph with maximum degree $\Delta(G)$ and maximum
  multiplicity $\mu(G)$. Using the palette $\MK=[\Delta(G)+\mu(G)]$, any
  precoloured distance-$2$ matching can be extended to a proper
  edge-colouring of all of~$G$.
\end{conjecture}

\noindent
Conjecture~\ref{conj:main} strengthens Proposition~\ref{prop:albertson1} in
two ways: we impose a weaker constraint on the distance between precoloured
edges, and we use a smaller palette. Evidently, we believe that in
edge-precolouring the distance requirement ought to be not as strong as it
is for vertex-precolouring extension. In Section~\ref{sec:counterexample},
however, we show how Conjecture~\ref{conj:main} becomes false if we are
allowed to precolour a distance-$1$ rather than a distance-$2$ matching. 
Note that Conjecture~\ref{conj:main} easily becomes false, even for trees, if we replace the
palette~$\MK$ by $[\Delta(G)]$ or by $[\chi'(G)]$, where $\chi'(G)$ is the
chromatic index of~$G$. For
instance, consider stars with each edge subdivided exactly once; see
Figure~\ref{fig:extree}.

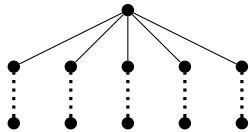
\begin{figure}
  \centering

\begin{tikzpicture}[-,>=,node distance=0.75cm,scale=1,draw,nodes={circle,draw,fill=black, inner sep=1.5pt}]

  \node (i1) {};
  \node (i2) [right of=i1] {};
  \node (i3) [right of=i2] {};
  \node (i4) [right of=i3] {};
  \node (i5) [right of=i4] {};

  \node (0) [above of=i3] {};

  \node (j1) [below of=i1] {};
  \node (j2) [below of=i2] {};
  \node (j3) [below of=i3] {};
  \node (j4) [below of=i4] {};
  \node (j5) [below of=i5] {};

  \path (0) edge [] (i1);
  \path (0) edge [] (i2);
  \path (0) edge [] (i3);
  \path (0) edge [] (i4);
  \path (0) edge [] (i5);

  \path (j1) edge [dotted,very thick,left] (i1);
  \path (j2) edge [dotted,very thick,left] (i2);
  \path (j3) edge [dotted,very thick,left] (i3);
  \path (j4) edge [dotted,very thick,left] (i4);
  \path (j5) edge [dotted,very thick,left] (i5);

%

\end{tikzpicture}
  \caption{A representative $G$ of a class of trees, with a non-extendable
    precoloured \mbox{(distance-$2$)} matching, using the palette
    $[\Delta(G)]=[\chi'(G)]$. Dashed lines indicate edges precoloured with
    colour~$1$.\label{fig:extree}}
\end{figure}

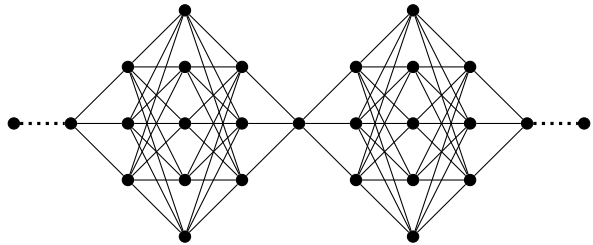
\begin{figure}
  \centering

\begin{tikzpicture}[-,>=,node distance=0.75cm,scale=1,draw,nodes={circle,draw,fill=black, inner sep=1.5pt}]

\node (a1) {};

\node (b1) [right of=a1] {};

  \path (a1) edge [dotted,very thick] (b1);

\node (c2) [right of=b1] {};
\node (c1) [above of=c2] {};
\node (c3) [below of=c2] {};

  \path (b1) edge [] (c1);
  \path (b1) edge [] (c2);
  \path (b1) edge [] (c3);

\node (d3) [right of=c2] {};
\node (d2) [above of=d3] {};
\node (d1) [above of=d2] {};
\node (d4) [below of=d3] {};
\node (d5) [below of=d4] {};

  \path (c1) edge [] (d1);
  \path (c1) edge [] (d2);
  \path (c1) edge [] (d3);
  \path (c1) edge [] (d4);
  \path (c1) edge [] (d5);
  \path (c2) edge [] (d1);
  \path (c2) edge [] (d2);
  \path (c2) edge [] (d3);
  \path (c2) edge [] (d4);
  \path (c2) edge [] (d5);
  \path (c3) edge [] (d1);
  \path (c3) edge [] (d2);
  \path (c3) edge [] (d3);
  \path (c3) edge [] (d4);
  \path (c3) edge [] (d5);

\node (e2) [right of=d3] {};
\node (e1) [above of=e2] {};
\node (e3) [below of=e2] {};

  \path (e1) edge [] (d1);
  \path (e1) edge [] (d2);
  \path (e1) edge [] (d3);
  \path (e1) edge [] (d4);
  \path (e1) edge [] (d5);
  \path (e2) edge [] (d1);
  \path (e2) edge [] (d2);
  \path (e2) edge [] (d3);
  \path (e2) edge [] (d4);
  \path (e2) edge [] (d5);
  \path (e3) edge [] (d1);
  \path (e3) edge [] (d2);
  \path (e3) edge [] (d3);
  \path (e3) edge [] (d4);
  \path (e3) edge [] (d5);

\node (f1) [right of=e2] {};

  \path (f1) edge [] (e1);
  \path (f1) edge [] (e2);
  \path (f1) edge [] (e3);

\node (g2) [right of=f1] {};
\node (g1) [above of=g2] {};
\node (g3) [below of=g2] {};

  \path (f1) edge [] (g1);
  \path (f1) edge [] (g2);
  \path (f1) edge [] (g3);

\node (h3) [right of=g2] {};
\node (h2) [above of=h3] {};
\node (h1) [above of=h2] {};
\node (h4) [below of=h3] {};
\node (h5) [below of=h4] {};

  \path (g1) edge [] (h1);
  \path (g1) edge [] (h2);
  \path (g1) edge [] (h3);
  \path (g1) edge [] (h4);
  \path (g1) edge [] (h5);
  \path (g2) edge [] (h1);
  \path (g2) edge [] (h2);
  \path (g2) edge [] (h3);
  \path (g2) edge [] (h4);
  \path (g2) edge [] (h5);
  \path (g3) edge [] (h1);
  \path (g3) edge [] (h2);
  \path (g3) edge [] (h3);
  \path (g3) edge [] (h4);
  \path (g3) edge [] (h5);

\node (i2) [right of=h3] {};
\node (i1) [above of=i2] {};
\node (i3) [below of=i2] {};

  \path (i1) edge [] (h1);
  \path (i1) edge [] (h2);
  \path (i1) edge [] (h3);
  \path (i1) edge [] (h4);
  \path (i1) edge [] (h5);
  \path (i2) edge [] (h1);
  \path (i2) edge [] (h2);
  \path (i2) edge [] (h3);
  \path (i2) edge [] (h4);
  \path (i2) edge [] (h5);
  \path (i3) edge [] (h1);
  \path (i3) edge [] (h2);
  \path (i3) edge [] (h3);
  \path (i3) edge [] (h4);
  \path (i3) edge [] (h5);

\node (j1) [right of=i2] {};

  \path (j1) edge [] (i1);
  \path (j1) edge [] (i2);
  \path (j1) edge [] (i3);

\node (k1) [right of=j1] {};

  \path (j1) edge [dotted,very thick] (k1);

\end{tikzpicture}
  \caption{A representative~$G$ of a class of bipartite graphs, with a
    non-extendable matching consisting of two edges, using the palette
    $[\Delta(G)]=[\chi'(G)]$. Dashed lines indicate edges precoloured with
    colour $1$.\label{fig:exdistant}}
\end{figure}

In
another direction, one might wonder if a strong enough distance
requirement on the precoloured matching permits us to take a smaller palette,
like $[\Delta(G)]$ or $[\chi'(G)]$. This fails however, even for bipartite
graphs, as we now show.

First, for any positive integer $m$, let $D_m$ denote the bipartite graph
on vertex set $\{x\}\cup A^x\cup B\cup A^y\cup\{y\}$, where $|A^x|=|A^y|=m$
and $|B|=2m-1$, and whose edge set is the set of all pairs between
$\{x\}\cup B$ and $A^x$ and between $\{y\}\cup B$ and $A^y$. Let us observe
an easy property of the graph $D_m$: in any proper edge-colouring of $D_m$
with colours from $[2m]$, there must be at least one edge of colour $1$
incident to $x$ or $y$. For otherwise, since each vertex in $A^x$ has
degree $2m$, there must be $m$ edges of colour $1$ between $A^x$ and $B$;
similarly, there must be $m$ edges of colour $1$ between $A_y$ and $B$. But
this implies that there are $2m$ distinct edges of colour $1$ incident to
the $2m-1$ vertices in $B$, which means that a vertex of $B$ is incident to
two edges of colour $1$, a contradiction.

Next, for any positive integers $\ell,m$, let $G_{m,\ell}$ be the graph
formed by taking $\ell$ disjoint copies $H_1,\dotsc,H_\ell$ of $D_m$ with
vertex sets labelled $\{x_i\}\cup A^x_i\cup B_i\cup A^y_i\cup\{y_i\}$,
identifying $y_i$ with $x_{i+1}$ for all $i=1,\dotsc,\ell-1$, and then
adding two new vertices $x'$ and $y'$ and two new edges~$x'x_1$
and~$y_\ell y'$. See Figure~\ref{fig:exdistant} for a depiction of
$G_{3,2}$. It is straightforward to check that~$G_{m,\ell}$ is bipartite,
has maximum degree $2m$, and that the edges $x'x_1$ and $y_\ell y'$ are at
distance $4\ell+1$ in $G_{m,\ell}$. Consider a precolouring of $G_{m,\ell}$
from the palette $[2m]=[\Delta(G_{m,\ell})]=[\chi'(G_{m,\ell})]$ in which
the edges $x'x_1$ and $y_\ell y'$ are precoloured $1$. Suppose, for a
contradiction, that there is a proper extension of this precolouring. Then
there can be no edge of colour $1$ between~$A^y_1$ and~$B_1$. By our
observation about $D_m$, there must be an edge of colour $1$ between
$A^y_1$ and $y_1=x_2$. It follows by an induction (via copies of $D_m$)
that there is an edge of colour $1$ between $A^y_\ell$ and $y_\ell$. Since
$y_\ell y'$ is precoloured $1$, we have arrived at our desired
contradiction.

If true, Conjecture~\ref{conj:main} would extend Vizing's
theorem~\cite{Viz64}, which is independently due to Gupta,
cf.~\cite{Gup74}. A variant of Conjecture~\ref{conj:main} was proved by
Berge and Fournier~\cite[Cor.~2]{BeFo91} --- they showed that extension is
guaranteed, even from precoloured distance-$1$ matchings, provided that all
edges of the matching have been precoloured with the same colour.

In this paper, we prove several special cases of
Conjecture~\ref{conj:main}, in particular, for bipartite multigraphs,
subcubic multigraphs, and planar graphs of large enough maximum degree.
Indeed, for these classes we show that Conjecture~\ref{conj:main} holds
even when the precoloured set is allowed to be a distance-$1$ matching.
Moreover, we prove a variant of Conjecture~\ref{conj:main}, where the
extended edge-colouring \emph{avoids} some prescribed colours on a
(distance-$1$) matching. We discuss this further in
Subsection~\ref{sub:main}. However, first allow us to place the conjecture
in context by giving some preliminary observations.

By the following easy observation, Conjecture~\ref{conj:main} is also
related to list edge-colouring, and therefore to the \emph{List Colouring
  Conjecture} (LCC), which states that $\ch'(G)=\chi'(G)$ for any
multigraph~$G$ (where $\ch'(G)$ is, as usual, the list chromatic index
of~$G$).

For a non-precoloured edge, we define its \emph{precoloured degree} as the
number of adjacent precoloured edges.

\begin{observation}\label{cor:choose}\mbox{}\\*
  Let~$G$ be a multigraph with list chromatic index $\ch'(G)$. For a
  positive integer~$k$, take the palette as $\MK=[\ch'(G)+k]$. If~$G$ is
  properly precoloured so that the precoloured degree of any
  non-precoloured edge is at most~$k$, then the precolouring can be
  extended to a proper edge-colouring of all of~$G$.  
\end{observation}

\noindent
So, if we assume that the LCC holds, then the following weak form of
Conjecture~\ref{conj:main} holds as well: using the palette
$\MK=[\Delta(G)+\mu(G)+1]$, any precoloured distance-$2$ matching extends
to all of~$G$. Observation~\ref{cor:choose} follows from a more refined
statement we will give in Section~\ref{sec:bipartite}.

Due to the remarkable work of Kahn~\cite{Kah96b,Kah96a,Kah00} on
edge-colourings and list edge-colourings of (multi)graphs, not only does an
asymptotic form of Conjecture~\ref{conj:main} hold, but so does a
precolouring extension of an asymptotic form of the Goldberg--Seymour
Conjecture (which we review in Subsection~\ref{sub:background}). Kahn's
theorem and Observation~\ref{cor:choose} together imply the following.

\begin{proposition}\label{prop:kahn}\mbox{}\\*
  For any $\eps>0$, there exists a constant~$C_\eps$ such that the
  following holds. For any multigraph~$G$ with $\chi'(G)\ge C_\eps$, any
  precoloured matching using the palette $\MK=[(1+\eps)\chi'(G)]$ can be
  extended to a proper edge-colouring of all of~$G$.
\end{proposition}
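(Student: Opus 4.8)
The plan is to combine Proposition~\ref{prop:choose} with Kahn's asymptotic solution of the List Colouring Conjecture for multigraphs. Among the results of~\cite{Kah96b,Kah96a,Kah00} is the statement that $\ch'(G)=(1+o(1))\,\chi'(G)$ as $\chi'(G)\to\infty$; concretely, for every $\delta>0$ there is a constant $C(\delta)$ such that every multigraph~$G$ with $\chi'(G)\ge C(\delta)$ satisfies $\ch'(G)\le(1+\delta)\,\chi'(G)$. This is the only non-elementary ingredient, and I would quote it as a black box.

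First I would record the trivial observation that if $M\subseteq E(G)$ is a matching, then every non-precoloured edge $e=uv$ has precoloured degree at most~$2$: the matching $M$ contains at most one edge at~$u$ and at most one edge at~$v$. Hence a precoloured matching always meets the hypothesis of Proposition~\ref{prop:choose} with $k=2$.

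Next, given $\eps>0$, I would set $\delta:=\eps/2$ and $C_\eps:=\max\{C(\eps/2),\lceil 4/\eps\rceil\}$. For any multigraph~$G$ with $\chi'(G)\ge C_\eps$ we then have
\[
  \ch'(G)+2 \;\le\; (1+\tfrac{\eps}{2})\,\chi'(G)+2 \;\le\; (1+\tfrac{\eps}{2})\,\chi'(G)+\tfrac{\eps}{2}\,\chi'(G) \;=\; (1+\eps)\,\chi'(G),
\]
where the second inequality uses $\chi'(G)\ge 4/\eps$. Thus the palette $\MK=[(1+\eps)\chi'(G)]$ has size at least $\ch'(G)+2$, and applying Proposition~\ref{prop:choose} with $k=2$ to the precoloured matching produces the required extension.

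The one point needing a little care is that Proposition~\ref{prop:choose} is stated for the palette $[\ch'(G)+k]$ exactly, whereas here we are handed the larger palette $[(1+\eps)\chi'(G)]$, on which the precoloured matching may well use colours lying outside $[\ch'(G)+2]$. This is harmless: the proof of Proposition~\ref{prop:choose} assigns to each non-precoloured edge~$e$ the list of all palette colours not appearing on the (at most $k$) precoloured edges meeting~$e$ and then invokes the definition of the list chromatic index on the subgraph of non-precoloured edges; this goes through verbatim for any palette of size at least $\ch'(G)+k$, since every such list still has size at least $\ch'(G)$. So there is no genuine obstacle in this proposition — the entire weight rests on Kahn's theorem, and the remainder is just the arithmetic in~$\eps$ together with the bound of~$2$ on precoloured degrees in a matching.
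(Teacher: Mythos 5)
Your proposal is correct and is exactly the argument the paper intends: the paper gives no separate proof, stating only that the proposition follows from Kahn's theorem combined with Proposition~\ref{prop:choose}, which is precisely your combination of the asymptotic bound $\ch'(G)\le(1+\eps/2)\chi'(G)$ with the observation that a matching has precoloured degree at most~$2$. Your closing remark that Proposition~\ref{prop:choose} applies verbatim to any palette of size at least $\ch'(G)+k$ correctly disposes of the only point of friction.
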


\noindent
If we replace $\chi'(G)$ in the statement above by $\Delta(G)+\mu(G)$ or by
the Goldberg--Seymour bound, then the statement remains valid, either due
to Vizing's theorem or due to another theorem of Kahn.

One of our motivations for the formulation and study of
Conjecture~\ref{conj:main} comes from the close connections with
vertex-precolouring and with the LCC.

\subsection{Main Results}\label{sub:main}

Although it appears that the LCC and our conjecture are independent
statements, we have obtained several results corresponding to specific
areas of success in list edge-colouring. In summary, we confirm
Conjecture~\ref{conj:main} for bipartite multigraphs, subcubic multigraphs,
and planar graphs of large enough maximum degree. We also obtain a
precolouring extension variant of Shannon's theorem, and we confirm a
relaxed version of Conjecture~\ref{conj:main}, where the extended
edge-colouring \emph{avoids} some prescribed colours on a matching.
Furthermore, all of these partial results hold in the more general context
where the precoloured set is allowed to be a distance-$1$ matching, rather
than the distance-$2$ matching required by Conjecture~\ref{conj:main}. In
fact, in this section we mostly present our main results restricted to
precoloured matchings, to aid clarity, even when yet more general
statements hold.

Our first result is an edge-precolouring extension of K\H{o}nig's theorem
that any bipartite multigraph~$G$ is $\Delta(G)$-edge-colourable, whereas
the subsequent result is an edge-precolouring analogue of Shannon's theorem
that any multigraph~$G$ is
$\bigl\lfloor\tfrac32\Delta(G)\bigr\rfloor$-edge-colourable.

\begin{theorem}\label{thm:bipartite}\mbox{}\\*
  Let~$G$ be a bipartite multigraph with maximum degree $\Delta(G)$. With
  the palette $\MK=[\Delta(G)+1]$, any precoloured matching can be extended
  to a proper edge-colouring of all of~$G$.
\end{theorem}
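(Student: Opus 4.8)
The plan is to proceed by induction on the number of non-precoloured edges, peeling off one carefully chosen edge at a time, while maintaining the property that what remains is a bipartite multigraph of maximum degree at most~$\Delta(G)$ together with a partial proper colouring from the palette~$[\Delta(G)+1]$ whose ``defect'' is controlled. The base case is when every edge is precoloured, which is vacuous. For the inductive step, I would pick a non-precoloured edge $e = uv$ and try to extend. Observe that among the at most $\Delta(G)-1$ other edges at~$u$ and the at most $\Delta(G)-1$ other edges at~$v$, the colours forbidden at~$e$ number at most $2(\Delta(G)-1)$, which for $\Delta(G)\ge 2$ can exceed $\Delta(G)+1$; so a naive greedy argument fails, and this is precisely where the matching hypothesis and bipartiteness must be exploited together.

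The key observation is the standard Kempe-chain / alternating-path tool for bipartite graphs, adapted to respect the precoloured matching~$M$. Suppose when we come to colour $e = uv$ there is no free colour. Since only one edge of~$M$ can be incident to~$u$ and one to~$v$ (as $M$ is a matching), at most two of the forbidden colours at~$e$ come from~$M$; the remaining forbidden colours come from previously-coloured non-precoloured edges, and those we are free to recolour. Concretely, I would choose a colour~$\alpha$ missing at~$u$ and a colour~$\beta$ missing at~$v$ (these exist because each of $u,v$ has degree at most~$\Delta(G)$ and the palette has size $\Delta(G)+1$), and consider the $(\alpha,\beta)$-alternating walk starting at~$u$. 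In a properly edge-coloured bipartite graph this walk is a path, and swapping $\alpha\leftrightarrow\beta$ along it frees~$\alpha$ at~$v$ without creating a conflict — unless the path reaches~$v$, or unless the swap would recolour an edge of~$M$. The matching structure has to be used to show we can always pick the pair $(\alpha,\beta)$ so that neither bad event occurs: because $M$ contributes at most one constraint at each of~$u$ and~$v$, and the palette has one spare colour beyond~$\Delta(G)$, there is enough slack to avoid the $M$-edges in the chosen Kempe chain.

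I expect the main obstacle to be precisely the interaction between Kempe swaps and the precoloured matching: a single alternating-path swap may be blocked by an $M$-edge of colour $\alpha$ or $\beta$ lying on the chain, and one cannot recolour it. The remedy I would pursue is a careful case analysis — or, better, setting up the induction on a slightly stronger statement (for instance, ordering the non-precoloured edges so that at each step the current edge is incident to at most one $M$-edge whose colour still matters, or arguing that one may always route the Kempe chain to terminate before hitting an $M$-edge by the choice of~$\alpha$, $\beta$ among the available spare colours). An alternative, possibly cleaner route is to reduce directly to an edge-list-colouring statement: delete the precoloured matching~$M$, assign to each remaining edge the list of colours in $[\Delta(G)+1]$ not used by its at-most-two incident $M$-edges, note each such list has size at least $\Delta(G)-1$ while the graph $G - M$ still has maximum degree at most~$\Delta(G)$, and then invoke Galvin's theorem that bipartite multigraphs are $\Delta$-edge-choosable together with a short argument that the one or two extra available colours absorb the degree loss; the delicate point there is that $G-M$ may still have maximum degree~$\Delta(G)$ at vertices untouched by~$M$, so Galvin is applied with lists of size $\Delta(G)-1 < \Delta(G-M)$ and one must localise the argument to the components or use the defect more carefully. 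Either way, the crux is quantifying how the single edge of~$M$ at each endpoint frees up exactly enough room in a size-$(\Delta+1)$ palette.
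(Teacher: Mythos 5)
Your second route is essentially the paper's proof, but as written it has a genuine gap, and you have correctly located it yourself without closing it. The paper deletes the precoloured matching $M$, gives each remaining edge the list of colours in $[\Delta(G)+1]$ not appearing on incident $M$-edges, and then applies not Galvin's theorem but its local strengthening due to Borodin, Kostochka and Woodall (Theorem~\ref{thm:bkw}): a bipartite multigraph is $f$-edge-choosable for $f(uv)=\max\{d(u),d(v)\}$. This is exactly the tool your sketch is missing. With it the accounting closes: an edge $e=uv$ of $G'=G-M$ has list size $\Delta(G)+1$ minus the number of distinct colours on $M$-edges at $u$ or $v$, and each such $M$-edge also lowers the $G'$-degree of the corresponding endpoint by one, so $|\ell(e)|\ge\max\{d_{G'}(u),d_{G'}(v)\}$ in every case (list size $\Delta-1$ forces both endpoints to be $M$-covered, hence both $G'$-degrees at most $\Delta-1$; list size $\Delta$ forces at least one endpoint covered; and so on). Your proposed fix of ``localising to components'' does not work, since a component of $G-M$ can contain both $M$-covered vertices (where lists are short) and uncovered vertices of degree $\Delta$; the localisation has to be per edge, which is precisely what the $\max\{d(u),d(v)\}$ version provides.

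Your first route (Kempe chains) is not a proof as it stands: once $\alpha$ and $\beta$ are fixed, the $(\alpha,\beta)$-alternating path from $u$ is determined, and a precoloured edge of colour $\alpha$ or $\beta$ anywhere along it blocks the swap. You assert there is ``enough slack'' to choose $(\alpha,\beta)$ avoiding this, but you give no argument, and the number of admissible pairs can be small while the number of $M$-edges met by the candidate chains is not obviously controlled. I would abandon that route and complete the list-colouring reduction with Theorem~\ref{thm:bkw}; note that the same computation, done with precoloured degree at most $k$ at each vertex and palette $[\Delta(G)+k]$, gives the paper's more general Theorem~\ref{thm:predegree}.
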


\noindent
As indicated in Figures~\ref{fig:extree} and~\ref{fig:exdistant}, the palette
size in Theorem~\ref{thm:bipartite} is sharp.

\begin{theorem}\label{thm:shannonmain}\mbox{}\\*
  Let~$G$ be a multigraph with maximum degree $\Delta(G)$. With the palette
  $\MK=\bigl[\,\bigl\lfloor\tfrac32\Delta(G)+\tfrac12\bigr\rfloor\,\bigr]$,
  any precoloured matching can be extended to a proper edge-colouring of
  all of~$G$.
\end{theorem}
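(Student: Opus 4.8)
The plan is to mimic the standard proof of Shannon's theorem --- the one that builds a colouring edge by edge using Vizing-type fans/Kempe chains --- but carefully so that it never disturbs the precoloured matching~$M$. Set $k=\lfloor\tfrac32\Delta+\tfrac12\rfloor$, so that $k\ge\tfrac32\Delta-\tfrac12$; the key numerical fact we will use repeatedly is that for any vertex~$v$ of degree at most~$\Delta$ and any edge $uv$, the number of colours forbidden at~$v$ (from edges already coloured at~$v$) plus the number forbidden at~$u$ is at most $2(\Delta-1)<k$, so a colour is always available \emph{somewhere}; the usual difficulty is only that the colour available at~$u$ may differ from the one available at~$v$, which is exactly what Kempe chains fix. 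First I would order $E(G)\setminus M$ arbitrarily as $e_1,\dots,e_m$ and colour them one at a time, maintaining the invariant that the partial colouring together with the fixed colours on~$M$ is proper. When we reach $e_i=uv$, if some colour in $\MK$ is free at both~$u$ and~$v$ we use it and move on.

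The heart of the argument is the case where no colour is simultaneously free at both ends. Here I would run the Shannon/Vizing fan construction at~$u$: pick a colour $\alpha$ free at~$u$ and a colour $\beta$ free at~$v$, and look at the $(\alpha,\beta)$-Kempe chain (the maximal alternating path in the partially-coloured graph) starting at~$v$. Swapping colours on this chain frees $\alpha$ at~$v$ while keeping the colouring proper, \emph{unless} the chain reaches~$u$. The crucial point for the precolouring extension is that a Kempe swap on a two-coloured path changes the colour only of edges \emph{on that path}; since the path is a union of alternating edges, a precoloured edge $f\in M$ lying on the chain would have its colour flipped. To avoid this I would choose the colour~$\beta$ (the free colour at~$v$) so that the $(\alpha,\beta)$-chain avoids~$M$. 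Because $M$ is a \emph{matching}, at most one precoloured edge meets~$u$ and at most one meets~$v$; more importantly, the counting above gives us slack --- there are at least $k-2(\Delta-1)\ge 1$ and in fact several free colours to play with at each relevant vertex, and by a short case analysis (branching on whether the precoloured edge at~$v$, if any, has colour $\alpha$) we can always select $\beta$ so that no edge of~$M$ is ever recoloured. When the chain does reach~$u$, one uses the standard Shannon trick of shifting the fan by one and repeating; the same ``$M$ is a matching'' bookkeeping controls the finitely many colours involved.

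The main obstacle, and the place I expect to spend the most care, is precisely this interaction between Kempe chains and the precoloured matching: we must guarantee that \emph{throughout} the fan-rotation/chain-swap procedure for a single edge~$e_i$, no edge of~$M$ ever gets flipped, and we must verify that the crude Shannon bound $\lfloor\tfrac32\Delta+\tfrac12\rfloor$ leaves exactly enough room to do so (in contrast, a Vizing-style $\Delta+1$ bound would \emph{not} suffice here, consistent with Conjecture~\ref{conj:main} requiring $\Delta+\mu$). Concretely the delicate sub-case is a vertex~$v$ of degree exactly~$\Delta$ incident with a precoloured edge: then only $k-\Delta$ colours are ``safe'' at~$v$ in the strong sense, and one checks $k-\Delta\ge\lceil\tfrac12\Delta-\tfrac12\rceil\ge 1$, with the fan giving the extra colours needed on the $u$-side. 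A clean way to package all of this is to prove, by the same induction, the slightly stronger statement allowing a bounded precoloured degree (as in Proposition~\ref{prop:choose}) rather than just a matching; I would state the induction in that form, since it makes the ``one precoloured edge per endpoint'' accounting transparent and handles the multigraph multiplicities without extra work.
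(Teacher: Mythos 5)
There is a genuine gap, and it sits exactly where you say you ``expect to spend the most care.'' Your plan stands or falls on the claim that, whenever a Kempe swap or fan rotation is needed, one can choose the colours $\alpha,\beta$ so that the $(\alpha,\beta)$-chain through $v$ contains no edge of $M$; this is asserted (``by a short case analysis we can always select $\beta$'') but never argued, and it is not a local matter. The chain is the whole component of the $\alpha\beta$-subgraph containing $v$: any edge of $M$ coloured $\alpha$ or $\beta$ that is incident with \emph{any} vertex of that component lies on it and would be flipped. Since $M$ is a matching, it may contain arbitrarily many edges bearing the colour $\alpha$ (or $\beta$), scattered anywhere in the graph, and the handful of colours free at $u$ and $v$ gives you no control over which distant $M$-edges the chain runs into; truncating the swap before such an edge destroys properness at the truncation vertex. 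Moreover, the numerical ``slack'' you invoke is simply false: with $k=\bigl\lfloor\tfrac32\Delta+\tfrac12\bigr\rfloor$ one has $2(\Delta-1)<k$ and $k-2(\Delta-1)\ge1$ only for $\Delta\le3$, so for $\Delta\ge4$ there need not be any colour free at both ends, nor ``several free colours to play with'' in the sense you use. (The paper itself remarks that Kempe chains and Vizing fans seem to be rendered useless by precoloured edges; your proposal does not overcome that obstruction.)

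For comparison, the paper avoids recolouring altogether: it proves the stronger Theorem~\ref{thm:shannon} (precoloured degree at most $k$, palette $\bigl[\,\bigl\lfloor\tfrac32\Delta+\tfrac12k\bigr\rfloor\,\bigr]$) by deleting the precoloured edges to get $G'$, giving each remaining edge $e$ the list of palette colours not used on precoloured edges adjacent to $e$, and checking that $|\ell(e)|\ge\max\{d_{G'}(u),d_{G'}(v)\}+\bigl\lfloor\tfrac12\min\{d_{G'}(u),d_{G'}(v)\}\bigr\rfloor$; the Borodin--Kostochka--Woodall $f$-edge-choosability theorem (Theorem~\ref{thm:bkwshannon}) then colours $G'$ from these lists, and no edge of $M$ is ever touched. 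If you want a fan/Kempe-chain proof instead, you must supply an actual mechanism (not a colour count) that keeps every swap disjoint from $M$, and at present no such mechanism is in your write-up.
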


\noindent
Due to the Shannon multigraphs, this last statement is sharp if $\Delta(G)$
is even, and within~$1$ of being sharp if $\Delta(G)$ is odd.
Theorems~\ref{thm:bipartite} and~\ref{thm:shannonmain} are proved in
Section~\ref{sec:bipartite} using powerful list colouring tools developed
by Borodin, Kostochka and Woodall~\cite{BKW97}.

\medskip
The following theorem concerns multigraphs that are subcubic, i.e., of
maximum degree at most~$3$. Note that Theorem~\ref{thm:subcubic} improves
upon Theorem~\ref{thm:shannonmain} for $\Delta(G)=3$.

\begin{theorem}\label{thm:subcubic}\mbox{}\\*
  Let~$G$ be a subcubic multigraph. With the palette $\MK=[4]$, any
  precoloured matching can be extended to a proper edge-colouring of all
  of~$G$.
\end{theorem}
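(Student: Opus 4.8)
The plan is to recast the statement as a list edge-colouring problem for the graph $G':=G-M$, where $M$ denotes the precoloured matching, and then to invoke the degree-choosability theorem of Erd\H{o}s, Rubin and Taylor (and, independently, Borodin). Each vertex that meets $M$ does so in exactly one edge; for such a vertex $w$ write $c_w\in[4]$ for the colour of that edge. Give each edge $e=uv$ of $G'$ the list $\mathcal{L}(e):=[4]\setminus\{\,c_w : w\in\{u,v\}\text{ meets }M\,\}$. Any proper edge-colouring of $G'$ in which every edge $e$ receives a colour from $\mathcal{L}(e)$ combines with the colouring of $M$ to give a proper $[4]$-edge-colouring of all of $G$ extending it; so it suffices to colour $G'$ this way, which I will do one component of $G'$ at a time. (A minimal-counterexample argument with Vizing-style fans and Kempe chains is another natural option, but Kempe swaps can disturb precoloured edges; the list formulation avoids this.)

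The key observation is that these lists are never small relative to the line graph $L(G')$. A vertex meeting $M$ has degree at most $2$ in $G'$, and every vertex has degree at most $3$; so, writing $\varepsilon_u=1$ if $u$ meets $M$ and $\varepsilon_u=0$ otherwise, we have $\deg_{G'}(u)+\varepsilon_u\le 3$ for every vertex $u$, and $|\mathcal{L}(e)|=4-\varepsilon_u-\varepsilon_v$, whence
\[
  |\mathcal{L}(e)|\;\ge\;(\deg_{G'}(u)-1)+(\deg_{G'}(v)-1)\;\ge\;\deg_{L(G')}(e)
\]
for every edge $e=uv$ of $G'$. In particular $|\mathcal{L}(e)|\ge 2$ always, and $|\mathcal{L}(e)|\ge 3$ whenever $e$ meets a vertex of degree $3$. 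Recall now that a connected graph $H$ carrying a list of size at least $\deg_H(v)$ at each vertex $v$ is list-colourable unless $H$ is a \emph{Gallai tree}, that is, unless each of its blocks is a complete graph or an odd cycle. Applying this to each component of $L(G')$ handles every component that is not a Gallai tree, and it remains to colour the components of $G'$ whose line graphs are Gallai trees.

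For such a component $C$ I would run a short structural analysis of the subcubic multigraphs with Gallai-tree line graph: $C$ is a path, an odd cycle, the exceptional graph on $\{v,a,c\}$ with a double edge $va$ and single edges $vc,ac$ (whose line graph is $K_4$), or a graph whose line graph is a block graph all of whose blocks are complete of size $2$ or $3$, each size-$3$ block being the set of three edges at a degree-$3$ vertex of $C$ (this last family includes $K_{1,3}$, double and triple edges, and various "caterpillar-like" graphs). A path component is $2$-choosable, so its edges can be coloured from the lists. For the exceptional $K_4$-graph, the two parallel edges have the full list $[4]$ while the other two have lists of size $\ge 3$, so a system of distinct representatives exists by Hall's condition. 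For the block-graph family, every vertex $x$ of $L(C)$ has $|\mathcal{L}(x)|$ at least the size of the largest block containing it --- a size-$3$ block sits at a degree-$3$ vertex of $C$, which does not meet $M$, forcing $|\mathcal{L}(x)|\ge 3$ --- so one colours greedily by repeatedly peeling off a leaf block and extending through it.

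That leaves the odd cycle $C$, which I expect to be the main obstacle, since an odd cycle with all edge-lists of size $2$ need not be list-colourable. If some edge of $C$ has a list of size $\ge 3$, delete it, colour the resulting path from its lists (each of size $\ge 2$), and colour the deleted edge last. Otherwise every edge-list has size exactly $2$, so every vertex of $C$ meets $M$ and $\mathcal{L}(uv)=[4]\setminus\{c_u,c_v\}$ with $c_u\neq c_v$ for every edge $uv$; if, moreover, all of these lists were equal, then $\{c_u,c_v\}$ would be a fixed pair of colours all along $C$, so $w\mapsto c_w$ would be a proper $2$-colouring of an odd cycle --- impossible. Hence two adjacent edge-lists differ, and one finishes: choose adjacent edges $f,f'$ with $\mathcal{L}(f)\neq\mathcal{L}(f')$ and a colour $\gamma\in\mathcal{L}(f)\setminus\mathcal{L}(f')$, colour $f$ with $\gamma$, and colour the remaining edges greedily around the cycle from $f$ towards $f'$; the final edge $f'$ is left with a single effective constraint because $\gamma\notin\mathcal{L}(f')$. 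Assembling the cases gives the required $\mathcal{L}$-edge-colouring of $G'$. In brief, the substance is the reduction to the degree-choosability regime, plus the observation that list assignments arising from a matching avoid the two configurations --- an odd cycle with all lists equal, and a clique block with a list smaller than the clique --- on which that theorem alone would be silent.
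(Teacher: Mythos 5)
Your reduction is exactly the one the paper uses: it derives Theorem~\ref{thm:subcubic} from the more general Theorem~\ref{thm:gallai}, whose proof likewise deletes the precoloured edges, assigns each surviving edge the list of colours not seen on adjacent precoloured edges, verifies $|\mathcal{L}(e)|\ge d_{L(G')}(e)$, and invokes the Borodin--Erd\H{o}s--Rubin--Taylor criterion (Theorem~\ref{thm:gallaitree}). Your handling of paths, odd cycles, the multigraph with line graph $K_4$, and the star-type $K_3$ blocks is sound. However, your enumeration of the subcubic multigraphs whose line graph is a Gallai tree omits one case, and it is precisely an instance of the configuration your closing sentence says must be ruled out (``a clique block with a list smaller than the clique''): a component $C$ of $G-M$ that is a \emph{simple triangle} on three vertices of degree~$2$. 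Its line graph is a single $K_3$ block that is \emph{not} the set of three edges at a degree-$3$ vertex, so it is excluded by the qualifier in your fourth case; and the peeling argument you give there fails for it, since all three vertices of $C$ may meet $M$, so all three lists may have size $2<3$. The case cannot be dismissed out of hand ($K_3$ with three lists of size $2$ is not in general colourable), but it closes easily: if the three lists $[4]\setminus\{c_u,c_v\}$, $[4]\setminus\{c_u,c_w\}$, $[4]\setminus\{c_v,c_w\}$ all had size $2$ and coincided, then $c_u,c_v,c_w$ would be three pairwise distinct elements of a two-element set, which is absurd; and a $K_3$ whose lists have size at least $2$ and are not all identical always admits a proper colouring. (A triangle with a further edge at one of its vertices creates a $K_4$-minus-an-edge block in the line graph and is already excluded by the Gallai-tree hypothesis, so the triangle only occurs as an entire component; this is the sort of completeness check your sketch leaves implicit.)

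It is worth noting how the paper avoids this enumeration altogether: it retains the second half of the exceptional condition in Theorem~\ref{thm:gallaitree}, namely that $|\ell(e)|=d_{L(C)}(e)$ for \emph{every} edge of the bad component. Combined with the easy bound $|\ell(e)|\ge d_{C}(v)$ for each endpoint $v$ of $e$, tightness forces every vertex of the bad component to have at least two distinct neighbours; this kills all path-, tree- and star-type components in one stroke and leaves only odd cycles and (possibly multi-edged) triangles to analyse, the multi-edged triangle then being excluded for subcubic $G$ by a degree count. If you keep the tightness condition, your case analysis collapses to essentially those two cases and the gap above disappears automatically. One further small point: $|\mathcal{L}(e)|$ equals $4-\varepsilon_u-\varepsilon_v$ only when $c_u\ne c_v$; in general it is at least that, which is the direction you need, and you do use the borderline situation correctly later when you deduce $c_u\ne c_v$ from $|\mathcal{L}(e)|=2$.
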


\noindent
The example we give in Section~\ref{sec:counterexample} shows that~$3$ is
the largest value of $\Delta(G)$ for which we are guaranteed that the
palette $[\Delta(G)+1]$ is enough to extend every precoloured matching to a
proper edge-colouring of the whole graph. In other words,
Theorem~\ref{thm:subcubic} is best possible with respect to $\Delta(G)$. A
form of Theorem~\ref{thm:subcubic}, for subcubic simple graphs and with a
distance condition on the precoloured matching, was observed by Albertson
and Moore~\cite{AlMo01}. Although the LCC remains open for subcubic graphs,
Juvan, Mohar and \v{S}krekovski~\cite{JMS98} have made a significant
attempt. They showed that for any subcubic graph~$G$, if lists of~$3$
colours are given to the edges of a subgraph~$H$ with $\Delta(H)\le2$ and
lists of~$4$ colours to the other edges, then~$G$ has a proper
edge-colouring using colours from those lists.

Theorem~\ref{thm:subcubic} is a direct corollary of the following theorem,
which may be of interest in its own right. Its proof uses a
degree-choosability condition and can be found in Section~\ref{sec:gallai}.

\begin{theorem}\label{thm:gallai}\mbox{}\\*
  Let~$G$ be a connected multigraph with maximum degree $\Delta(G)$. Choose
  a non-negative integer~$k$ such that $\Delta(L(G))\le\Delta(G)+k$, and
  take the palette as $\MK=[\Delta(G)+k]$. If~$G$ is properly precoloured
  so that the precoloured degree of any vertex is at most~$k$, then the
  precolouring can be extended to a proper edge-colouring of all of~$G$,
  except in the following cases:

  \smallskip
  \qitem{(a)}$k=0$ and~$G$ is a simple odd cycle;

  \smallskip
  \qitem{(b)}$G$ is a triangle with edges of multiplicity $m_1,m_2,m_3$ and
  $k=\min\{m_1,m_2,m_3\}-1$.
\end{theorem}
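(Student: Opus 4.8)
The plan is to reduce the statement to a known list-edge-colouring result of Borodin–Kostochka–Woodall (or, equivalently, to a degree-choosability statement for line graphs), by absorbing the precoloured edges into the lists of the remaining edges. More precisely, I would first set $H = G - S$, where $S$ is the set of precoloured edges, and assign to each non-precoloured edge $e$ the list $L(e) = \MK \setminus \{\,\text{colours on edges of } S \text{ adjacent to } e\,\}$. Since $|\MK| = \Delta(G)+k$ and each endpoint of $e$ is incident with at most $k$ precoloured edges, we have $|L(e)| \ge \Delta(G)+k - 2k = \Delta(G)-k$ in the worst case; but the better bound to track is the local one, $|L(e)| \ge d_{L(G)}(e) - (\text{number of precoloured neighbours of } e) + (\Delta(G)+k - \Delta(L(G)))$. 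The point is that extending the precolouring is exactly the same as finding an $L$-edge-colouring of $H$, so it suffices to show $H$ is $L$-edge-choosable for these lists.

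Next I would invoke degree-choosability: a connected graph is degree-choosable unless it is a \emph{Gallai tree} (every block is a clique or an odd cycle), in which case it is exactly $(d-1)$-choosable failures occur only for the obvious reasons. Applying this to the line graph $L(H)$, I must check that, for every non-precoloured edge $e$, the list size $|L(e)|$ is at least $d_{L(H)}(e)$, with strict inequality somewhere in each block unless that block is a clique or odd cycle of $L(H)$. The degree of $e$ in $L(H)$ is $d_{L(G)}(e)$ minus the number of precoloured edges adjacent to $e$; meanwhile $|L(e)| \ge \Delta(G)+k - (\text{precoloured degree of } e) \ge \Delta(L(G)) - (\text{precoloured degree of } e) \ge d_{L(H)}(e)$, using $\Delta(L(G)) \le \Delta(G)+k$. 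So the list sizes always meet the degree threshold; the colouring exists \emph{unless} some block of $L(H)$ is a clique or odd cycle on which all these inequalities are tight simultaneously. The bulk of the argument is then to identify exactly which multigraphs $G$, precolourings, and values of $k$ produce such a tight block, and to show that in every such case one can either recolour to break the tightness or the configuration is genuinely one of the two exceptions (a).

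The main obstacle, and where the real case analysis lives, is the tightness analysis: I would need to characterise when $L(H)$ has a block that is a complete graph or an odd cycle with every vertex $e$ satisfying $|L(e)| = d_{L(H)}(e)$. Tightness forces, for each such $e$, that $\Delta(G)+k = \Delta(L(G))$, that $e$'s two endpoints both have full degree $\Delta(G)$, that $e$ has full precoloured degree $k$ contributed correctly, and that the lists on the block all coincide appropriately; chasing these constraints through the block structure of line graphs (odd cycles in $L(H)$ come from odd cycles in $H$ or from certain small dense configurations, and cliques in $L(H)$ come from stars or triangles in $H$) should force $G$ itself to be very restricted. For $k=0$ the edge-precoloured degree is zero, $H = G$, and we are simply asking when $L(G)$ fails to be degree-choosable with $\Delta(L(G))$ colours, which recovers the classical fact that the only obstruction is a simple odd cycle, giving case (a); the triangle-with-multiplicities computation giving case (b) is the one genuinely new small exceptional family, and I would verify directly that there the lists on the $K_3$-block of $L(G)$ (or the relevant odd-cycle block) are all equal of size exactly $k+1$ after removing the precoloured colours, so no extension exists — whereas a short argument handles every nearby configuration by a local swap.
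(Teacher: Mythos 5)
Your setup is exactly the paper's: delete the precoloured edges to get $G'$, give each remaining edge the list of palette colours not seen on adjacent precoloured edges, observe $|\ell(e)|\ge d_{L(G')}(e)$ via $\Delta(G)+k\ge\Delta(L(G))\ge d_{L(G)}(e)$, and invoke the Borodin/Erd\H{o}s--Rubin--Taylor degree-choosability theorem on the line graph, so that failure forces some component $C'$ of $G'$ whose line graph is a Gallai tree with all list sizes tight. Up to that point the proposal is correct and matches the paper. But everything after that --- which is where essentially all of the paper's proof lives --- is only gestured at, and two concrete ingredients are missing. First, you never argue why the Gallai tree $L(C')$ must be a single odd cycle or a single clique: the paper rules out odd-cycle blocks of length at least five because line graphs are claw-free, and rules out nontrivial block structures and stars by first proving (from a second inequality $|\ell(e)|\ge d_{C'}(v)$, which you do not derive) that every vertex of $C'$ has at least two neighbours. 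Without these steps you cannot reduce to ``$C'$ is an odd cycle or a triangle.''

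Second, and more importantly, the exceptional cases (a) and (b) assert that \emph{all of $G$} is an odd cycle or a triangle, not merely that some component of $G'$ is. You must therefore show that no precoloured edge joins $C'$ to the rest of $G$, and this is not a ``local swap'': in the odd-cycle case the paper uses the fact that non-choosability with lists of size two forces all lists identical, combined with a parity argument around the cycle, to conclude every $K(v)$ is empty (hence $k=0$ and $G=C'$); in the triangle case it uses that all lists on a clique with lists of size $m-1$ must coincide, together with pairwise disjointness of the external colour sets $A(u),A(v),A(w)$, to force those sets empty. Neither argument appears in your sketch. A smaller point: your remark that for $k=0$ the only obstruction is a simple odd cycle is not right --- a triangle with minimum edge-multiplicity $1$ (and some multiplicity exceeding $1$) is also an exception at $k=0$, falling under case (b).
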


\noindent
Note that, when restricted to precoloured matchings, this theorem produces
weak or limited bounds for larger maximum degree. On the other hand, if we
replace every precoloured edge in the example of Figure~\ref{fig:extree} by
a precoloured multi-edge of multiplicity~$k$ (or $k+1$) and a precolouring
from~$[k]$ (or $[k+1]$), we see that the palette bound (or precoloured
degree condition) is best possible.

In the case where $k=\Delta(L(G))-\Delta(G)$ in Theorem~\ref{thm:gallai},
the number of colours used is equal to the maximum degree of the line
graph. In that sense the theorem can be considered as a precolouring
extension of Brooks's theorem restricted to line graphs. It is relevant to
mention that vertex-precolouring extension versions of Brooks's
theorem~\cite{AKW05,Axe03,Rac09} require, among other conditions, a large
minimum distance between the precoloured vertices.

\medskip
The class of planar graphs could be of particular interest. There is a
prominent line of work on (list) edge-colouring for this class, which we
discuss further in Subsection~\ref{sub:background} and
Section~\ref{sec:planar}. Our main contributions to this area are the
following results, the second one of which can be viewed as a strengthening
of another old result of Vizing~\cite{Viz65}, provided the graph's maximum
degree is large enough.

\begin{theorem}\label{thm:planar}\mbox{}\\*
  Let~$G$ be a planar graph with maximum degree $\Delta(G)\ge17$. Using the
  palette $\MK=[\Delta(G)+1]$, any precoloured matching can be extended to
  a proper edge-colouring of all of~$G$.
\end{theorem}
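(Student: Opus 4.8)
The plan is to argue by contradiction, via a minimal counterexample combined with a discharging argument adapted to the precoloured matching; but first it pays to isolate the genuinely hard part. If the precoloured matching $M$ happens to be \emph{induced} (a distance-$2$ matching), then every non-precoloured edge is adjacent to at most one edge of $M$ — two matching edges meeting a common edge would be at distance exactly~$2$ — so, invoking the theorem of Borodin, Kostochka and Woodall \cite{BKW97} that every planar graph with $\Delta\ge12$ satisfies $\ch'(G)=\Delta(G)$, Proposition~\ref{prop:choose} with $k=1$ already gives an extension using only $\Delta(G)+1$ colours. Equivalently, deleting $M$ from $G$ one seeks a proper edge-colouring of $G-M$ in which every edge avoids the colours of the matching edges at its ends, i.e.\ a list edge-colouring of a planar graph with lists of size at least $\Delta-1$, the deficit below $\Delta$ occurring only on the \emph{bad} edges $uv$ both of whose endpoints are matched. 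So all the work goes into controlling these bad edges.

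For the general case I would fix a palette size $d+1$ with $d\ge17$, take a counterexample — a planar graph $G$ with $\Delta(G)\le d$ and a precoloured matching $M$ that does not extend — with $|E(G)|$ minimum (fixing $d$ rather than $\Delta(G)$ sidesteps the fact that deleting an edge can lower the maximum degree), and derive a contradiction. The routine reductions come first: $G$ is connected; for every non-precoloured edge $uv$ one has $\deg_G(u)+\deg_G(v)\ge d+3$, since otherwise $G-uv$ extends by minimality and $uv$, seeing at most $d$ colours, can be coloured greedily; consequently $G$ has no vertex of degree~$2$ and every leaf is matched. The delicate reductions are the ones that touch the precoloured edges: a matching edge cannot simply be deleted, so to show that, say, a matching edge with low-degree endpoints, or a matched $3$-vertex, or a short alternating configuration around $M$, is reducible, I would delete a suitable set of \emph{non-precoloured} edges, extend the smaller instance, and then reinstate everything, repairing any clash with the prescribed colour of the matching edge by a Kempe $(\alpha,\beta)$-swap confined to the punctured region. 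Planarity, and the slack in $d\ge17$, are what let these confined recolourings succeed.

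With a sufficiently rich family of reducible configurations established, I would finish by discharging. Give each vertex $v$ the charge $\deg_G(v)-6$ and each face $f$ the charge $2\ell(f)-6$, so Euler's formula makes the total charge $-12$, and redistribute by standard planar rules, with low-degree vertices and triangular faces drawing charge from their high-degree neighbours and incident longer faces. Because a precoloured edge sitting on a would-be reducible configuration can obstruct it, the family of reducible configurations available here is thinner than in the classical (list) edge-colouring discharging proofs for planar graphs; absorbing that deficit is exactly where the hypothesis $\Delta(G)\ge17$ is spent, but the reductions must still force every vertex and every face to end with non-negative charge, contradicting the negative total.

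The step I expect to be the main obstacle is the reducibility of the configurations that meet the precoloured matching. Since $M$ is adversarial, its prescribed colours can be set up to defeat the naive greedy repair, so one must pick the Kempe-swap colours carefully and — this is the crux — argue that the swapped bichromatic chain does not travel far enough to flip the colour of \emph{another} precoloured edge, which would invalidate the extension globally. Getting the balance right between the strength of the reducible configurations the discharging needs and what these confined, matching-aware recolourings can actually deliver is what pushes the bound to $\Delta(G)\ge17$ rather than the $\Delta(G)\ge12$ one might hope for from the list edge-colouring results alone.
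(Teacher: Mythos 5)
Your proposal is a plan rather than a proof, and the part you leave open is precisely the part that carries all the difficulty. The easy reductions you list (minimal counterexample, $d(u)+d(v)\ge\Delta+3$ for $uv\notin M$, no degree-$2$ vertices, every leaf matched) and the induced-matching warm-up via Proposition~\ref{prop:choose} and \cite{BKW97} are all correct and match the paper. But for the configurations that actually meet the precoloured matching you propose to ``repair any clash \ldots by a Kempe $(\alpha,\beta)$-swap confined to the punctured region,'' and you yourself flag as ``the crux'' that the swapped bichromatic chain must not reach another precoloured edge. You offer no mechanism for guaranteeing this, and there isn't one in general: a Kempe chain in a planar graph can be arbitrarily long and there is no planarity- or degree-based confinement that prevents it from terminating at a second matching edge, whose prescribed colour would then be violated. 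This is exactly why the paper states that Kempe chains (and Vizing fans, etc.) ``seem to be rendered useless by precoloured edges.'' Without a concrete, verified family of reducible configurations, the discharging half of your argument has nothing to discharge against, so the proof does not close.

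It is worth seeing how the paper sidesteps the obstacle you ran into: it never reduces a configuration containing a matching edge at all. Its only two reducible configurations consist purely of \emph{non-matching} edges --- a single edge $uv\notin M$ with $d(u)+d(v)\le\Delta+2$ (delete, extend by induction, colour greedily), and an even cycle of non-matching edges between $V_3$ and $V_\Delta$ (delete, extend, finish by $2$-edge-choosability of even cycles); the latter yields the counting inequality $|V_\Delta|>|V_3|$, which injects the extra negative charge the discharging needs. The matching itself is then handled entirely on the discharging side: the rules are designed so that, e.g., a high-degree vertex donates extra charge ($3$ instead of $2$) to a triangular face whose other two vertices are low-degree and joined by an edge of $M$, and the fact that $M$ is a matching bounds how often such expensive donations can occur around a single vertex (the paper's Claim~\ref{claim:avg}). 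If you want to salvage your outline, replace the Kempe-swap reductions with this ``reduce only non-matching structures, and pay for the matching in the discharging rules'' strategy; as written, the proposal has a genuine gap at its central step.
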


\begin{theorem}\label{thm:planar2}\mbox{}\\*
  Let~$G$ be a planar graph with maximum degree $\Delta(G)\ge23$. Using the
  palette $\MK=[\Delta(G)]$, any precoloured distance-$3$ matching can be
  extended to a proper edge-colouring of all of~$G$.
\end{theorem}

\noindent
Due to the trees exhibited in Figure~\ref{fig:extree}, the palette size in
Theorem~\ref{thm:planar} cannot be reduced, while the minimum distance
condition in Theorem~\ref{thm:planar2} cannot be weakened. In
Section~\ref{sec:planar}, we give some more results on when it is possible
for a precoloured matching in a planar graph to be extended. A summary of
the results is given in Table~\ref{tab:planar}.

\begin{table}
  \centering
  \begin{tabular}{clclp{42mm}}
    \toprule
    & Palette $\MK$ & Distance $t$ & Max.\ degree $\Delta$
    \qquad\qquad\quad & Reference\\
    \midrule
    1. & $[\Delta+4]$ & 1 & all $\Delta$ & Thm.~\ref{thm:shannonmain}
    ($\Delta\le7$) and\hspace*{\fill}\linebreak
    Obs.~\ref{cor:choose} with~\cite{Bon15} ($\Delta\ge8$)\\
    2. & $[\Delta+3]$ & 1 & $\Delta\le5$; $\Delta\ge8$ &
    Thm.~\ref{thm:shannonmain}; Obs.~\ref{cor:choose} with~\cite{Bon15}\\
    3. & $[\Delta+2]$ & 1 & $\Delta\ge12$ & Obs.~\ref{cor:choose}
    with~\cite{BKW97}\\
    4. & $[\Delta+2]$ & 2 & $\Delta\le4$; $\Delta\ge8$ & \cite{JMS99};
    Obs.~\ref{cor:choose} with~\cite{Bon15}\\
    5. & $[\Delta+2]$ & 3 & all $\Delta$ & Prop.~\ref{prop:albertson1}\\
    6. & $[\Delta+1]$ & 1 & $\Delta\le3$; $\Delta\ge17$ &
    Thm.~\ref{thm:subcubic}; Thm.~\ref{thm:planar}\\
    7. & \mbox{}$[\Delta+1]$ & \mbox{}2 & $\Delta\ge12$ &
    Obs.~\ref{cor:choose} with~\cite{BKW97}\\
    8. & $[\Delta]$ & 3 & $\Delta\ge23$ & Thm.~\ref{thm:planar2}\\
    \bottomrule
  \end{tabular}
  \caption{Summary of edge-precolouring extension results for planar graphs
    with maximum degree~$\Delta$, when a distance-$t$ matching~$M$ is
    precoloured using the palette~$\MK$. See Section~\ref{sec:planar} for
    further details how these results can be obtained.}\label{tab:planar}
\end{table}

\medskip
Suppose that we would go to any means to obtain an extension form of
Vizing's theorem, say, by weakening the precolouring condition. We still
let $\MK=[K]$ be a palette of available colours. Given a subset
$S\subseteq E$ of edges and an arbitrary (i.e., not necessarily proper)
colouring of elements of~$S$ using only colours from~$\MK$, is there a
proper colouring of all edges of~$G$ (using colours from~$\MK$) that
\emph{differs from} the given colouring on every edge of~$S$? We may
consider the coloured set~$S$ as a set of \emph{forbidden} (coloured)
edges, while the full colouring, if it can be produced, is called an
\emph{avoidance} of the forbidden edges. We can show the following result,
which, while it is in one sense weaker than the statement in
Conjecture~\ref{conj:main}, is directly implied by neither the LCC nor
other existing precolouring results, implies Vizing's theorem, and provides
further evidence in support of Conjecture~\ref{conj:main}. (This result was
stated as a conjecture in an earlier version of this paper.)

\begin{theorem}\label{thm:forbidden}\mbox{}\\*
  Let~$G$ be a multigraph with maximum degree $\Delta(G)$ and maximum
  multiplicity $\mu(G)$. Using the palette $\MK=[\Delta(G)+\mu(G)]$, any
  forbidden matching can be avoided by a proper edge-colouring of all
  of~$G$.
\end{theorem}

\noindent
We use an aforementioned result of Berge and Fournier and a recolouring
argument to prove this theorem in Section~\ref{sec:forbidden}.

\medskip
Some basic knowledge of edge-colouring is a prerequisite to the
consideration of edge-precolouring extension problems --- we provide some
related background in the next subsection. To our frustration, many of the
major methods for colouring edges (such as Kempe chains, Vizing fans,
Kierstead paths, Tashkinov trees) seem to be rendered useless by
precoloured edges. Though Conjecture~\ref{conj:main} may at first seem as
if it should be an ``easy extension'' of Vizing's theorem, it might well be
difficult to confirm (if true). We are keen to learn of related
edge-precolouring results independent of current list colouring
methodology.

\subsection{Further Background}\label{sub:background}

Edge-colouring is a classic area of graph theory. We give a quick overview
of some of the most relevant history for our study. The reader is referred
to the recent book by Stiebitz, Scheide, Toft and Favrholdt~\cite{SSTF12}
for detailed references and fuller insights. The lower bound
$\chi'(G)\ge\Delta(G)$ is obviously true for any multigraph $G$. Close to a
century ago, K\H{o}nig proved that all bipartite multigraphs meet this
lower bound with equality. Shannon~\cite{Sha49} in 1949 proved that
$\chi'(G)\le\bigl\lfloor\tfrac32\Delta(G)\bigr\rfloor$ for any multigraph
$G$. Somewhat later, Gupta (as mentioned in~\cite{Gup74}) and,
independently, Vizing~\cite{Viz64} proved that
$\chi'(G)\le\Delta(G)+\mu(G)$ for any multigraph~$G$, so
$\chi'(G)\in\{\Delta(G),\Delta(G)+1\}$ if~$G$ is simple. Both the Shannon
bound and the Gupta--Vizing bound are tight in general due to the Shannon
multigraphs, which are triangles whose multi-edges have balanced
multiplicities. (Note however that the latter bound can be improved for
specific choices of $\Delta(G)$ and $\mu(G)$, as described in the
work of Scheide and Stiebitz~\cite{ScSt09}.)

A notable conjecture on edge-colouring arose in the 1970s, on both sides of
the iron curtain. The Goldberg--Seymour Conjecture, due independently to
Goldberg~\cite{Gol73} and Seymour~\cite{Sey79}, asserts that
$\chi'(G)\in\{\,\Delta(G),\Delta(G)+1,\lceil\rho(G)\rceil\,\}$ for any
multigraph~$G$, where
\[\rho(G)=\max\Bigl\{\,\frac{2|E(G[T])|}{|T|-1}\Bigm|
T\subseteq V,\;|T|\ge3,\;\text{$|T|$ odd}\,\Bigr\}.\]
The parameter $\rho(G)$ is a lower bound on $\chi'(G)$ based on the maximum
ratio between the number of edges in~$H$ and the number of edges in a
maximum matching of~$H$, taken over induced subgraphs~$H$ of~$G$. This
conjecture remains open and is regarded as one of the most important
problems in chromatic graph theory. Perhaps the most outstanding progress
on this problem is due to Kahn~\cite{Kah96a}, who established an asymptotic
form.

The list variant of edge-colouring can be traced as far back as list
colouring itself. The concept of list colouring was devised independently
by Vizing~\cite{Viz76} and Erd\H{o}s, Rubin and Taylor~\cite{ERT80}, with
the iron curtain playing its customary role here too. The List Colouring
Conjecture (LCC) was already formulated by Vizing as early as 1975 and was
independently reformulated several times, a brief historical account of
which is given by, e.g., H{\"a}ggkvist and Janssen~\cite{HaJa97}. For more
on the LCC, particularly with respect to the probabilistic method, consult
the monograph of Molloy and Reed~\cite{MoRe02}. The results on the LCC most
relevant to our investigations also happen to be two of the most striking,
both from the mid-1990s. First, Galvin~\cite{Gal95} used a beautiful short
argument to prove Dinitz's Conjecture (concerning the extension of arrays to
partial Latin squares), which at the same time confirmed the LCC for
bipartite multigraphs. Not long after Galvin's work, Kahn applied powerful
probabilistic methods, with inspiration from extremal combinatorics and
statistical physics, to asymptotically affirm the LCC~\cite{Kah96a,Kah00}.
For more background on Kahn's proof, related methods, and improvements,
consult~\cite{HaJa97,MoRe00,MoRe02}.

Inspiration for this class of problems may also be taken from list
vertex-colouring. For instance, we utilise a degree-choosability criterion
due independently to Borodin~\cite{Bor77} and Erd\H{o}s, Rubin and
Taylor~\cite{ERT80}. See for example a survey of Alon~\cite{Alo93} for an
excellent (if older) survey on list colouring in somewhat more generality.
We should mention that part of the motivation for studying list colouring
was to use it to attack other, less constrained colouring problems. The
connection has gone back in the other direction as well, as precolouring
extension demonstrates.

Activity in the area of precolouring extension increased dramatically as a
result of the startling proof by Thomassen of planar
$5$-choosability~\cite{Tho94}; a key ingredient in that proof was a
particular type of precolouring extension from some pair of adjacent
vertices, according to a specific planar embedding. A little bit later,
Thomassen asked about precolouring extension for planar graphs under a more
general setup~\cite{Tho97}. Eliding the planarity condition,
Albertson~\cite{Alb98} quickly answered Thomassen's question and proved
more: in any $k$-colourable graph, for any set of vertices with pairwise
minimum distance at least~$4$, any precolouring of that set from the
palette $[k+1]$ can be extended to a proper colouring of the entire graph.
(This implies Proposition~\ref{prop:albertson1} above.)

Since Albertson's seminal work, a large body of research has developed
around precolouring extension. But this research has focused almost
exclusively on extension of vertex-colourings. One of the few papers we are
aware of that deals with edge-precolouring extension is by Marcotte and
Seymour~\cite{MS90}, in which a different type of necessary condition for
extension is studied --- curiously, this paper predates the above mentioned
activity in vertex-precolouring.

For planar graphs, there has been significant interest in both
edge-colouring and list edge-colouring. It is known that planar graphs $G$
with $\Delta(G)\ge7$ satisfy $\chi'(G)=\Delta(G)$. This was proved in 1965
by Vizing~\cite{Viz65} in the case $\Delta(G)\ge8$, and much later by
Sanders and Zhao~\cite{SaZh01} for $\Delta(G)=7$. We remark that
Theorem~\ref{thm:planar2} strengthens this for $\Delta(G)$ somewhat larger.
Vizing conjectured that the same can be said for planar graphs $G$ with
$\Delta(G)=6$, but this long-standing question remains open. Vizing also
noted that not every planar graph $G$ with $\Delta(G) \in \{4,5\}$ is
$\Delta(G)$-edge-colourable. Regarding list edge-colouring, Borodin,
Kostochka and Woodall~\cite{BKW97} proved the LCC for planar graphs with
maximum degree at least~$12$, i.e., they proved that such graphs have list
chromatic index equal to their maximum degree. The LCC remains open for
planar graphs with smaller maximum degree, though it is known that if
$\Delta(G)\le4$ or $\Delta(G)\ge8$, then $\ch'(G)\le\Delta(G)+1$ (Juvan,
Mohar and \v{S}krekovski~\cite{JMS99} for $\Delta(G)\le4$;
Bonamy~\cite{Bon15} for $\Delta(G)=8$; Borodin~\cite{Bor90} for
$\Delta(G)\ge9$). As noted above, it is not true that planar graphs $G$
with $\Delta(G)\in\{4,5\}$ are always $\Delta(G)$-edge-choosable.

\section{Necessity of the Distance-2 Condition}
\label{sec:counterexample}

In this section, we show that if we omit the distance-$2$ condition on the
precoloured matching then Conjecture~\ref{conj:main} becomes false whenever
$\Delta(G)\ge 4$. For each $t\ge3$, we construct a graph~$G_t$ of maximum
degree ${t+1}$ with the property that, using the palette $\MK=[t+2]$, there
is a matching~$M$ and a precolouring of~$M$ that cannot be extended to a
proper edge-colouring of all of~$G_t$.

Our construction is based on an observation by Anstee and
Griggs~\cite{anstee-griggs}. For $t\ge3$, let~$H_t$ be the graph obtained
from $K_{t,t}$ by subdividing one edge.

\begin{lemma}[Anstee and
  Griggs~\cite{anstee-griggs}]\label{lem:uncolorable}\mbox{}\\*
  For every $t\ge3$, the equality $\chi'(H_t)=\Delta(H_t)+1=t+1$ holds.
\end{lemma}

\begin{proof}
  Since~$H_t$ has ${2t+1}$ vertices, its largest matching has size~$t$.
  Since~$H_t$ has ${t^2+1}$ edges, we cannot cover all the edges with~$t$
  matchings.
\end{proof}

\noindent
Let $A,B\subseteq V(H_t)$ be the original partite sets of $K_{t,t}$, so
that~$A$ and~$B$ are independent sets of size~$t$ in~$H_t$, and the only
vertex of~$H_t$ not contained in $A\cup B$ is the vertex of degree~$2$.
Let~$H'_t$ be the graph obtained from~$H_t$ by attaching a pendant edge to
each vertex of~$H_t$, and for each $v\in V(H_t)$, let $v'$ be the other
endpoint of the pendant edge at~$v$. Finally, set
$M_0=\{\,vv'\mid v\in V(H_t)\,\}$. We precolour the matching $M_0$ by
colouring~$vv'$ colour~$1$ if $v\in A$, and colouring~$vv'$ colour~$2$
otherwise. Now we define the full graph~$G_t$ by taking $t+1$ disjoint
copies of~$H'_t$, and adding a new vertex~$v^*$ adjacent to the unique
vertex of degree~$3$ in each copy of~$H'_t$. The precoloured matching~$M$
in~$G_t$ is just the union of each precoloured matching~$M_0$ in each copy
of~$H'_t$, with the same precolouring. Figure~\ref{fig:counterexample}
shows~$G_3$.

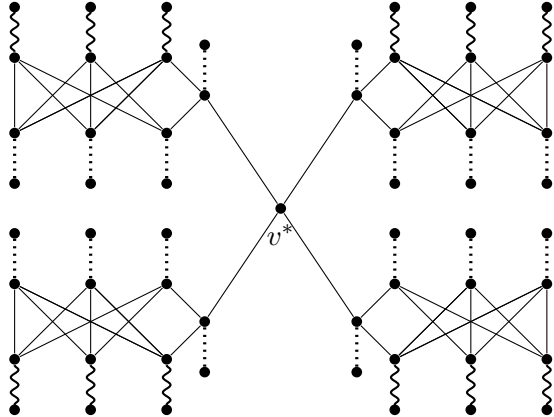
\begin{figure}
  \centering
  \begin{tikzpicture}
  \begin{scope}[xshift=-4.5cm, xscale=1]
    \foreach \i in {1,...,3}
    {
      \apoint{} (a1\i) at (\i cm, .5cm) {};
      \draw[thick, xred] (a1\i) -- ++(0cm, .67cm) node[style=vertex] {};
      \apoint{} (b1\i) at (\i cm, -.5cm) {};      
      \draw[thick, xblue] (b1\i) -- ++(0cm, -.67cm) node[style=vertex] {};
    }
    \foreach \i in {1,...,3}
    {
      \foreach \j in {1,2}
      {
        \draw (a1\i) -- (b1\j);
      }
    }
    \draw (a11) -- (b13); \draw (a12) -- (b13); \draw (b11) -- (a13); \draw (b12) -- (a13);
    \apoint{} (c1) at (3.5cm, 0cm) {}; \draw (c1) -- (a13); \draw (c1) -- (b13);    
    \draw[thick, xblue] (c1) -- ++(0cm, .67cm) node[style=vertex] {};
  \end{scope}
  \begin{scope}[xshift=4.5cm, xscale=-1]
    \foreach \i in {1,...,3}
    {
      \apoint{} (a2\i) at (\i cm, .5cm) {};
      \draw[thick, xred] (a2\i) -- ++(0cm, .67cm) node[style=vertex] {};
      \apoint{} (b2\i) at (\i cm, -.5cm) {};      
      \draw[thick, xblue] (b2\i) -- ++(0cm, -.67cm) node[style=vertex] {};
    }
    \foreach \i in {1,...,3}
    {
      \foreach \j in {1,2}
      {
        \draw (a2\i) -- (b2\j);
      }
    }
    \draw (a21) -- (b23); \draw (a22) -- (b23); \draw (b21) -- (a23); \draw (b22) -- (a23);
    \apoint{} (c2) at (3.5cm, 0cm) {}; \draw (c2) -- (a23); \draw (c2) -- (b23);    
    \draw[thick, xblue] (c2) -- ++(0cm, .67cm) node[style=vertex] {};
  \end{scope}
  \begin{scope}[xshift=4.5cm, yshift=-3cm, xscale=-1, yscale=-1]
    \foreach \i in {1,...,3}
    {
      \apoint{} (a3\i) at (\i cm, .5cm) {};
      \draw[thick, xred] (a3\i) -- ++(0cm, .67cm) node[style=vertex] {};
      \apoint{} (b3\i) at (\i cm, -.5cm) {};      
      \draw[thick, xblue] (b3\i) -- ++(0cm, -.67cm) node[style=vertex] {};
    }
    \foreach \i in {1,...,3}
    {
      \foreach \j in {1,2}
      {
        \draw (a3\i) -- (b3\j);
      }
    }
    \draw (a31) -- (b33); \draw (a32) -- (b33); \draw (b31) -- (a33); \draw (b32) -- (a33);
    \apoint{} (c3) at (3.5cm, 0cm) {}; \draw (c3) -- (a33); \draw (c3) -- (b33);    
    \draw[thick, xblue] (c3) -- ++(0cm, .67cm) node[style=vertex] {};
  \end{scope}
  \begin{scope}[xshift=-4.5cm, yshift=-3cm, xscale=1, yscale=-1]
    \foreach \i in {1,...,3}
    {
      \apoint{} (a4\i) at (\i cm, .5cm) {};
      \draw[thick, xred] (a4\i) -- ++(0cm, .67cm) node[style=vertex] {};
      \apoint{} (b4\i) at (\i cm, -.5cm) {};      
      \draw[thick, xblue] (b4\i) -- ++(0cm, -.67cm) node[style=vertex] {};
    }
    \foreach \i in {1,...,3}
    {
      \foreach \j in {1,2}
      {
        \draw (a4\i) -- (b4\j);
      }
    }
    \draw (a41) -- (b43); \draw (a42) -- (b43); \draw (b41) -- (a43); \draw (b42) -- (a43);
    \apoint{} (c4) at (3.5cm, 0cm) {}; \draw (c4) -- (a43); \draw (c4) -- (b43);    
    \draw[thick, xblue] (c4) -- ++(0cm, .67cm) node[style=vertex] {};
  \end{scope}
  \bpoint{v^*} (d) at (0cm, -1.5cm) {};
  \foreach \i in {1,...,4}
  {
    \draw (d) -- (c\i);
  }
\end{tikzpicture}
  \caption{A graph~$G_3$ with maximum degree~$4$ and a non-extendable
    precoloured matching using the palette~$[5]$. Wavy edges are
    precoloured~$1$, while dotted edges are precoloured~$2$.}
  \label{fig:counterexample}
\end{figure}

\begin{theorem}\label{thm:counterexample}\mbox{}\\*
  For every $t\ge 3$, using the palette $\MK=[t+2]=[\Delta(G_t)+\mu(G_t)]$,
  the precolouring of the matching~$M$ as described above cannot be
  extended to a proper edge-colouring of all of~$G_t$.
\end{theorem}

\begin{proof}
  Suppose to the contrary that~$G_t$ has an edge-colouring from $\MK$ that
  extends the precolouring of~$M$. Since every neighbour of~$v^*$
  has an incident edge precoloured~$2$, no edge incident to~$v^*$ can be
  coloured~$2$. Therefore, since $d(v^*) = t+1$, each of the ${t+1}$
  colours excluding~$2$ is used exactly once on the edges incident
  to~$v^*$. In particular, some edge~$e$ incident to~$v^*$ has colour~$1$.
  Let~$H$ be the copy of~$H_t$ containing the other endpoint of~$e$.
  Observe that no edge of~$H$ can be coloured~$1$ or~$2$: every edge
  joining~$A$ and~$B$ has an edge precoloured~$1$ at one endpoint and an
  edge precoloured~$2$ at the other, while the vertex of degree~$2$ in~$H$
  is incident to an edge precoloured~$2$ as well as the edge~$e$
  coloured~$1$. Hence all edges of~$H$ use only the~$t$ remaining colours.
  Since $\chi'(H_t) = t+1$ by Lemma~\ref{lem:uncolorable}, this is
  impossible.
\end{proof}

\section{Extensions of K\H{o}nig's and Shannon's Theorems}
\label{sec:bipartite}

Theorem~\ref{thm:predegree} below implies Theorem~\ref{thm:bipartite}, and
hence verifies Conjecture~\ref{conj:main} for bipartite multigraphs.
Theorem~\ref{thm:shannon} implies Theorem~\ref{thm:shannonmain}. Recall
that the precoloured degree of a vertex is the number of incident
precoloured edges.

\begin{theorem}\label{thm:predegree}\mbox{}\\*
  Let~$G$ be a bipartite multigraph and $k\ge1$. Take the palette as
  $\MK=[\Delta(G)+k]$. If~$G$ is properly precoloured so that the
  precoloured degree of any vertex is at most~$k$, then this precolouring
  can be extended to a proper edge-colouring of all of~$G$.
\end{theorem}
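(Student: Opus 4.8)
The plan is to reduce the statement to a list edge-colouring problem on the submultigraph of uncoloured edges, and then to invoke a degree-sensitive strengthening of Galvin's theorem due to Borodin, Kostochka and Woodall~\cite{BKW97}: every bipartite multigraph $H$ is edge-$f$-choosable, where $f(e)=\max\{d_H(u),d_H(v)\}$ for each edge $e=uv$ of $H$. Given such a tool, the task becomes purely a matter of bookkeeping with degrees.

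Let $S$ be the set of precoloured edges, let $\varphi$ be the given proper colouring of $S$, and let $H=G-S$, which is again bipartite. For a vertex $u$ write $p(u)$ for its precoloured degree, so that $p(u)\le k$ by hypothesis and $d_H(u)=d_G(u)-p(u)$. To each edge $e=uv$ of $H$ assign the list
\[
  L(e)=\MK\setminus\{\,c:\ c=\varphi(e')\ \text{for some precoloured edge }e'\ \text{incident with }u\ \text{or }v\,\}.
\]
Any proper edge-colouring of $H$ in which each edge $e$ receives a colour from $L(e)$ is, together with $\varphi$, exactly an extension of the precolouring to all of $G$. Hence it suffices to check that $|L(e)|\ge\max\{d_H(u),d_H(v)\}$ for every $e=uv\in E(H)$ and then apply the theorem above to $H$ with the lists $L$.

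For the verification, a union bound over the precoloured edges at $u$ and at $v$ shows that at most $p(u)+p(v)$ colours are deleted from $\MK$, so $|L(e)|\ge|\MK|-p(u)-p(v)=\Delta(G)+k-p(u)-p(v)$. On the other hand,
\[
  \max\{d_H(u),d_H(v)\}=\max\{d_G(u)-p(u),\,d_G(v)-p(v)\}\le\Delta(G)-\min\{p(u),p(v)\}.
\]
Therefore it is enough that $k\ge p(u)+p(v)-\min\{p(u),p(v)\}=\max\{p(u),p(v)\}$, which holds because $p(u),p(v)\le k$. In particular $|L(e)|\ge\max\{d_H(u),d_H(v)\}\ge1$ since $e\in E(H)$, so no list is empty. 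This completes the reduction and hence the proof.

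The argument is short once the list-colouring input is available; the one point that requires care — and the reason the palette size $\Delta(G)+k$ suffices rather than the naive $\Delta(G)+2k$ — is the trade-off between a vertex's precoloured degree (which shrinks the list) and its residual degree in $H$ (which shrinks the required list size), captured above through the quantity $\min\{p(u),p(v)\}$. The same reduction proves Theorem~\ref{thm:shannonmain}, replacing the bipartite list-colouring result by the corresponding degree-based edge-choosability bound for general multigraphs from~\cite{BKW97}.
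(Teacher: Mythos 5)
Your proposal is correct and follows essentially the same route as the paper: delete the precoloured edges, assign to each remaining edge the list of palette colours not appearing on adjacent precoloured edges, verify that $|L(e)|\ge\max\{d_H(u),d_H(v)\}$ via the trade-off between precoloured degree and residual degree, and apply the Borodin--Kostochka--Woodall edge-choosability theorem for bipartite multigraphs. The paper performs the same bookkeeping with a without-loss-of-generality ordering of the endpoint degrees rather than your $\min\{p(u),p(v)\}$ formulation, but the two calculations are equivalent, and your closing remark about Theorem~\ref{thm:shannonmain} matches the paper's treatment as well.
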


\begin{theorem}\label{thm:shannon}\mbox{}\\*
  Let~$G$ be a multigraph and $k\ge1$. Take the palette as
  $\MK=\bigl[\,\bigl\lfloor\tfrac32\Delta(G)+\tfrac12k\bigr\rfloor\,\bigr]$.
  If~$G$ is properly precoloured so that the precoloured degree of any
  vertex is at most~$k$, then this precolouring can be extended to a proper
  edge-colouring of all of~$G$.
\end{theorem}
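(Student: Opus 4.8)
The plan is to reduce Theorem~\ref{thm:shannon} to an ordinary list edge-colouring problem on a subgraph and then invoke the degree-sensitive list analogue of Shannon's theorem developed in~\cite{BKW97}. Let $S\subseteq E(G)$ be the precoloured set and let $G'=G-S$ be the multigraph obtained by deleting all precoloured edges. For each edge $e=uv$ of~$G'$, let $L(e)$ be the set of colours of~$\MK$ that appear on no precoloured edge incident with~$u$ or with~$v$; then any proper $L$-edge-colouring of~$G'$, together with the given precolouring of~$S$, is a proper edge-colouring of~$G$ extending it. So it suffices to show that~$G'$ is $L$-edge-colourable.

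Next I would bound the list sizes against the degrees of~$G'$. Write $p(w)$ for the precoloured degree of a vertex~$w$ and $d'(w)$ for its degree in~$G'$, so that $d'(w)+p(w)=d_G(w)\le\Delta(G)$ and, by hypothesis, $p(w)\le k$. Since $e=uv$ is adjacent to at most $p(u)+p(v)$ precoloured edges, $|L(e)|\ge|\MK|-p(u)-p(v)$. Substituting $|\MK|=\bigl\lfloor\tfrac32\Delta(G)+\tfrac12k\bigr\rfloor$, bounding $d_G(u),d_G(v)$ by $\Delta(G)$, and finally using $p(v)\le k$, a short computation yields
\[
  |L(uv)|\;\ge\;\Bigl\lfloor\tfrac12\bigl(d'(u)+d'(v)+\max\{d'(u),d'(v)\}\bigr)\Bigr\rfloor
  \qquad\text{for every edge }uv\text{ of }G'.
\]
The content here is that a vertex which is heavily precoloured in~$G$ loses exactly that much degree in~$G'$, so its incident residual lists stay large relative to~$G'$ even while they are small relative to~$\MK$. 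The right-hand side is the local Shannon bound, so the degree-sensitive list form of Shannon's theorem from~\cite{BKW97} applies to~$G'$ with the list assignment~$L$ and produces the colouring we want. Theorem~\ref{thm:predegree} is handled identically, except that~$G'$ is bipartite, so one appeals instead to the bipartite degree-list theorem of~\cite{BKW97} (for which $|L(uv)|\ge\max\{d'(u),d'(v)\}$ already suffices), the corresponding computation again reducing to $p(u),p(v)\le k$.

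The part I expect to be delicate is the appeal to~\cite{BKW97}: one must pin down the exact statement of their local list-colouring theorem, verify that the displayed inequality really does imply its hypothesis (there are several inequivalent-looking ways to write a ``local Shannon bound'', and the floors have to be tracked), and dispose of whatever exceptional configurations that theorem carries --- odd cycles and a handful of small multigraphs. Here it helps that the displayed inequality is strict unless $p(u)=p(v)=k$, $d_G(u)=d_G(v)=\Delta(G)$, and no colour is shared by a precoloured edge at~$u$ and a precoloured edge at~$v$; in each exceptional configuration this leaves slack to finish by hand, though extracting it is the awkward bit. Finally, it seems unavoidable to go through~\cite{BKW97}: one cannot simply list-colour~$G'$ via a degree-choosability criterion for the line graph~$L(G')$ in the style of Borodin and of Erd\H{o}s, Rubin and Taylor, because when $\Delta(G)-k$ is large the worst residual lists have size about $\tfrac32(\Delta(G)-k)$ while the corresponding line-graph degrees are about $2(\Delta(G)-k)$, so the residual lists are genuinely too short, and some Shannon-type decomposition of~$G'$ is essentially forced.
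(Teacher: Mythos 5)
Your proposal is correct and follows essentially the same route as the paper: delete the precoloured edges, give each remaining edge $uv$ the list of colours not used on adjacent precoloured edges, verify $|L(uv)|\ge\max\{d'(u),d'(v)\}+\bigl\lfloor\tfrac12\min\{d'(u),d'(v)\}\bigr\rfloor$ exactly as in your displayed computation, and apply the Borodin--Kostochka--Woodall list version of Shannon's theorem (Theorem~\ref{thm:bkwshannon}). The one point you flag as delicate is in fact harmless: that theorem is stated with no exceptional configurations, so there is nothing left to dispose of by hand.
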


\noindent
The two results are corollary to two theorems of Borodin, Kostochka and
Woodall~\cite{BKW97}. A multigraph~$G$ is \emph{$f$-edge-choosable}, where
$f:E(G)\to\mathbf{Z^+}$, if for any assignment of lists in which every
edge~$e$ receives a list of size at least $f(e)$, there is a proper
edge-colouring of~$G$ using colours from the lists.

\begin{theorem}[Borodin, Kostochka \& Woodall
  \cite{BKW97}]\label{thm:bkw}\mbox{}\\*
  Let~$G$ be a bipartite multigraph, and set $f(uv)=\max\{d(u),d(v)\}$ for
  each edge $uv\in E(G)$. Then~$G$ is $f$-edge-choosable.
\end{theorem}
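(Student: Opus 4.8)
The plan is to establish this by the kernel method for list colouring, working in the line multigraph $L(G)$ (a proper edge-colouring of $G$ from lists is exactly a proper vertex-colouring of $L(G)$ from the same lists). Recall the kernel lemma used by Galvin~\cite{Gal95}: if $D$ is an orientation of a graph $H$ in which every induced subdigraph has a \emph{kernel} --- an independent set $S$ such that every vertex outside $S$ has an out-neighbour in $S$ --- then $H$ is $L$-colourable for every list assignment with $|L(x)|\ge d^+_D(x)+1$ for all~$x$. Since we are given $|L(e)|\ge f(e)=\max\{d(u),d(v)\}$ for each edge $e=uv$, it suffices to produce an orientation $D$ of $L(G)$ that is kernel-perfect and satisfies $d^+_D(e)\le\max\{d(u),d(v)\}-1$ for every edge $e=uv$ of~$G$.

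To build such a $D$, I would start from a system of preference orders: at each vertex $w$ of $G$, choose a bijection $r_w\colon E(w)\to\{1,\dots,d(w)\}$, and orient $L(G)$ by putting an arc $e\to e'$ whenever $e,e'$ share a vertex $w$ with $r_w(e')<r_w(e)$ (so that $e$ points towards the edges that $w$ ranks more highly). Then $d^+_D(e)=(r_u(e)-1)+(r_v(e)-1)$ for every edge $e=uv$. The crucial observation is that $D$ is automatically kernel-perfect: a kernel of the subdigraph of $D$ induced by an edge set $E'\subseteq E(G)$ is exactly a \emph{stable matching} of $(V(G),E')$ with respect to the restricted preference orders, and a stable matching exists by the Gale--Shapley algorithm --- this is the only place where bipartiteness enters, since stable matchings can fail to exist on non-bipartite graphs. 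Hence the theorem reduces to a purely combinatorial statement: \emph{for every bipartite multigraph $G$ there exist bijections $r_w\colon E(w)\to\{1,\dots,d(w)\}$, one per vertex~$w$, such that $r_u(e)+r_v(e)\le\max\{d(u),d(v)\}+1$ for every edge $e=uv$.}

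Proving this last statement is the crux, and the step I expect to be the main obstacle. When $G$ is $\Delta$-regular it is easy: take a proper $\Delta$-edge-colouring $c$ (K\H{o}nig), and on the bipartition classes $X,Y$ set $r_x(e)=c(e)$ and $r_y(e)=\Delta+1-c(e)$, so that $r_x(e)+r_y(e)=\Delta+1$ exactly --- this is precisely Galvin's orientation. For irregular $G$, however, this recipe, and indeed any recipe that reads ranks off a single fixed global edge-colouring, or orders the edges at $w$ by the degrees of their opposite endpoints, only yields the weaker bound $d^+_D(e)\le\Delta(G)-1$ (or fails outright), because the ranks it produces do not track the \emph{local} degrees $d(u),d(v)$. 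I would instead build the orders by a genuinely global argument, for instance by induction on $|E(G)|$: identify a set of edges that can safely be assigned the current extreme (top/bottom) ranks at their endpoints --- intuitively a carefully chosen matching between consecutive degree classes, reminiscent of the matchings peeled off in the proof of K\H{o}nig's theorem --- delete it, re-index the remaining ranks, and recurse; alternatively, one can cast the existence of the $r_w$ as a bipartite assignment/feasibility problem and verify the relevant Hall-type condition. Once the preference orders are in hand, the orientation $D$ together with the kernel lemma completes the proof.
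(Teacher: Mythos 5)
First, a point of reference: the paper does not prove this statement --- Theorem~\ref{thm:bkw} is quoted from Borodin, Kostochka and Woodall~\cite{BKW97} and used as a black box --- so your proposal can only be measured against the literature. Your overall architecture is the correct one and is indeed how Galvin~\cite{Gal95} and~\cite{BKW97} proceed: pass to $L(G)$, invoke the kernel lemma, and obtain kernel-perfectness of a preference-induced orientation from the existence of stable matchings in bipartite preference systems (you correctly isolate this as the only place bipartiteness is used). The reduction to the combinatorial statement about rank functions is sound; the only cosmetic issue is that for parallel edges $d^+_D(e)=(r_u(e)-1)+(r_v(e)-1)$ should be ``$\le$'' rather than ``$=$'', which only helps you.

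The genuine gap is precisely the step you flag as the crux and then do not carry out: the existence of bijections $r_w\colon E(w)\to\{1,\dots,d(w)\}$ with $r_u(e)+r_v(e)\le\max\{d(u),d(v)\}+1$ for every edge $e=uv$. This is the entire content of the theorem beyond Galvin's global bound, and neither of the strategies you sketch survives scrutiny as stated. The natural induction --- peel off a matching $M$ covering all vertices of maximum degree, recurse on $G-M$, and reinsert each $e=uv\in M$ at the top rank at one end and at rank $1$ at the other --- fails: inserting $e$ at rank $1$ at $v$ shifts every other rank at $v$ up by one, and an edge $vz$ with $d_{G-M}(z)\ge d_{G-M}(v)+1$ and $z$ uncovered by $M$ keeps its target $\max\{d(v),d(z)\}+1=d(z)+1$ unchanged while its rank sum may rise to $d(z)+2$; and since an $M$-edge with both endpoints of degree at least $2$ cannot sit at the top rank at both ends, some such shift is unavoidable. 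The ``Hall-type feasibility'' route is also insufficient on its own: Hakimi's criterion does yield \emph{some} orientation of $L(G)$ with $d^+(uv)\le\max\{d(u),d(v)\}-1$ (one checks $\sum_w\binom{d_A(w)}{2}\le\sum_{uv\in A}\bigl(\max\{d(u),d(v)\}-1\bigr)$ for every $A\subseteq E(G)$ via the inequality $\max\ge$ average), but an arbitrary such orientation need not be transitive on the cliques $E(w)$, i.e., need not arise from linear orders at the vertices, and without that the identification of kernels with stable matchings --- and hence kernel-perfectness --- collapses. Until the rank functions are actually constructed (which is where all of the work in~\cite{BKW97} lies), what you have is an accurate reduction, not a proof.
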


\begin{theorem}[Borodin, Kostochka \&
  Woodall~\cite{BKW97}]\label{thm:bkwshannon}\mbox{}\\*
  Let~$G$ be a multigraph, and set
  $f(uv)=\max\{d(u),d(v)\}+
  \bigl\lfloor\tfrac12\min\{d(u),d(v)\}\bigr\rfloor$ for each edge
  $uv\in E(G)$. Then~$G$ is $f$-edge-choosable.
\end{theorem}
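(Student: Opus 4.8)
The plan is to prove this by the \emph{kernel method}, following the strategy Galvin~\cite{Gal95} devised for bipartite graphs and which Borodin, Kostochka and Woodall adapt to multigraphs. Recall that a \emph{kernel} of a digraph~$D$ is an independent set meeting the out-neighbourhood of every vertex not in it, and that~$D$ is \emph{kernel-perfect} if every induced subdigraph of~$D$ has a kernel. The reduction step is the lemma of Bondy, Boppana and Siegel: if~$D$ is a kernel-perfect orientation of a graph~$H$ and~$L$ is a list assignment with $|L(v)|\ge d^+_D(v)+1$ for every vertex~$v$, then~$H$ is properly $L$-colourable. Applying this with~$H=L(G)$, the line graph of~$G$, it suffices to produce a kernel-perfect orientation~$D$ of $L(G)$ in which each edge $e=uv$ of~$G$, regarded as a vertex of $L(G)$, satisfies $d^+_D(e)\le f(uv)-1=\max\{d(u),d(v)\}+\bigl\lfloor\tfrac12\min\{d(u),d(v)\}\bigr\rfloor-1$.

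To construct this orientation I would begin from a proper edge-colouring of~$G$; its existence (with $\bigl\lfloor\tfrac32\Delta(G)\bigr\rfloor$ colours, by Shannon's theorem) is needed only to supply, at each vertex~$w$, a linear order on the incident edges. For two adjacent edges meeting solely at~$w$, one orients the corresponding line-graph edge using the order at~$w$, but with a twist: at the endpoint of smaller degree the orientation should be made roughly \emph{balanced} — at least half of the other incident edges pointing towards~$e$ — rather than simply following the order. With such a rule, an edge $e=uv$ with $d(u)\ge d(v)$ acquires at most $d(u)-1$ out-neighbours through~$u$ and at most $\bigl\lfloor\tfrac12(d(v)-1)\bigr\rfloor\le\bigl\lfloor\tfrac12 d(v)\bigr\rfloor$ out-neighbours through~$v$, for a total of at most $f(uv)-1$, as required; parallel edges between the same pair~$u,v$ need extra care since such a bundle is mutually adjacent at both endpoints, and it is precisely here (balanced-multiplicity triangles) that the bound is tight.

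The main obstacle is reconciling the two demands placed on~$D$: the out-degree bound pushes towards balanced, near-tournament orientations at low-degree vertices, whereas kernel-perfectness is easiest to guarantee when the orientation at each vertex derives from a single linear priority order, so that a kernel can be extracted by a Gale--Shapley-style stable-matching argument as in Galvin's proof. Choosing the edge-colouring and the local orientation rule so that~$D$ is simultaneously kernel-perfect and meets the per-edge out-degree bound — including at the extremal triangle configurations, where no slack remains — is the technical heart of the argument, and is exactly the content of the relevant lemmas of~\cite{BKW97}. Once~$D$ is fixed, verifying kernel-perfectness amounts to showing that each induced subdigraph contains a stable set of edges (no edge strictly dominated at a common endpoint), obtained by repeatedly selecting at a suitable vertex the top-priority surviving edge; the out-degree estimate is then the routine casework indicated above, split according to which endpoint of each edge has larger degree and on the parities of $d(u)$ and $d(v)$.
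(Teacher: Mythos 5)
The paper does not prove this statement at all: Theorem~\ref{thm:bkwshannon} is quoted from~\cite{BKW97}, with the remark that it in fact follows from Theorem~\ref{thm:bkw}. Your proposal is not a proof either, and the gap is concrete: after setting up the kernel method (the Bondy--Boppana--Siegel lemma applied to $L(G)$, requiring a kernel-perfect orientation with $d^+(e)\le f(e)-1$), you state that choosing the edge-colouring and the local orientation rule so that the orientation is simultaneously kernel-perfect and meets the out-degree bound ``is exactly the content of the relevant lemmas of~\cite{BKW97}''. That construction \emph{is} the theorem; deferring it to the cited paper leaves the heart of the argument missing. Moreover, the route you sketch is doubtful as described: Galvin's stable-matching extraction of kernels~\cite{Gal95} is tied to bipartiteness (for non-bipartite $G$ the line graph $L(G)$ contains odd holes, so one cannot appeal to perfection-based kernel results, and an odd directed cycle has no kernel), and the ``mixed'' orientation you propose --- following a priority order at the larger-degree endpoint but balancing at the smaller-degree endpoint --- comes with no argument whatsoever that every induced subdigraph has a kernel. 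Likewise, parallel edges, which are adjacent at both endpoints and are precisely the tight (Shannon triangle) case, are dismissed with ``extra care'' although there is no slack left in the out-degree budget there.

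A complete and much shorter argument is the one the paper alludes to: deduce the statement from Theorem~\ref{thm:bkw}. Fix a bipartition $V=A\cup B$ maximising the number of cross edges, so every vertex $v$ has at most $\bigl\lfloor\tfrac12 d(v)\bigr\rfloor$ incident edges inside its own part. Colour the edges of $G[A]\cup G[B]$ greedily from their lists: such an edge $uv$ has at most $\bigl\lfloor\tfrac12 d(u)\bigr\rfloor+\bigl\lfloor\tfrac12 d(v)\bigr\rfloor-2<f(uv)$ adjacent edges inside the parts, so greedy colouring never gets stuck. Then delete the used colours from the lists of the cross edges; writing $a(w)$ for the number of intra-part edges at $w$, each cross edge $uv$ retains at least $f(uv)-a(u)-a(v)$ colours, and a two-line case check (using $a(w)\le\bigl\lfloor\tfrac12 d(w)\bigr\rfloor$) shows this is at least $\max\{d(u)-a(u),\,d(v)-a(v)\}$, which is exactly the degree bound needed to apply Theorem~\ref{thm:bkw} to the bipartite multigraph of cross edges. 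If you prefer to keep the kernel route, you must actually exhibit the orientation of $L(G)$ and prove its kernel-perfectness; as written, your text identifies where the difficulty lies but does not resolve it.
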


\noindent
Note that Theorem~\ref{thm:bkw} is a strengthening of Galvin's theorem;
while Theorem~\ref{thm:bkwshannon} is a list colouring version of Shannon's
theorem (and in fact follows from Theorem~\ref{thm:bkw}).

In our proofs of Theorems~\ref{thm:predegree} and~\ref{thm:shannon}, we use
the following refinement of Observation~\ref{cor:choose}. Given a graph $G$
and an edge $e$, the \emph{degree $d_G(e)$} of $e$ is the number of edges
adjacent to~$e$ in~$G$.

\begin{observation}\label{prop:choose}\mbox{}\\*
  Let $G=(V,E)$ be a multigraph. For a positive integer~$K$, take the
  palette as $\MK=[K]$. Suppose~$G$ is properly precoloured, with
  $S\subseteq E$ the precoloured edges. Set $E'=E\setminus S$, $G'=(V,E')$
  and $G''=(V,S)$. Suppose that~$G'$ is $f$-edge-choosable, for some
  function $f:E'\to\mathbf{Z^+}$. If for all $e\in E'$ we have
  $K-d_{G''+e}(e)\ge f(e)$, then the precolouring can be extended to a
  proper edge-colouring of all of~$G$.
\end{observation}

\begin{proof}[Proof of Theorems~\ref{thm:predegree} and~\ref{thm:shannon}]
  Assume that $G=(V,E)$ and $S\subseteq E$ is the set of precoloured edges.
  Set $E'=E\setminus S$, $G'=(V,E')$ and $G''=(V,S)$. Consider any
  uncoloured edge $e=uv\in E'$, and assume that $d_{G'}(u)\le d_{G'}(v)$.

  In the bipartite case, since
  $\Delta(G)-d_{G''}(v)\ge d_G(v)-d_{G''}(v)=d_{G'}(v)$ and
  $k\ge d_{G''}(u)$, we infer that
  \[|\MK|-d_{G''+e}(e)=
  \Delta(G)+k-\bigl(d_{G''}(u)+d_{G''}(v)\bigr)\ge d_{G'}(v)=
  \max\{\,d_{G'}(u),d_{G'}(v)\,\}.\]
  Theorem~\ref{thm:predegree} follows by combining Observation~\ref{prop:choose} and
  Theorem~\ref{thm:bkw}.

  In the general case (Theorem~\ref{thm:shannon}) we obtain
  \begin{align*}
    |\MK|-d_{G''+e}(e)&=
    \bigl\lfloor\tfrac32\Delta(G)+\tfrac12k\bigr\rfloor-
    \bigl(d_{G''}(u)+d_{G''}(v)\bigr)\\
    &=(\Delta(G)-d_{G''}(v))+ \bigl\lfloor\tfrac12\Delta(G)+
    \tfrac12k-(d_{G''}(u)\bigr\rfloor\\[2pt]
    &\ge d_{G'}(v)+\bigl\lfloor\tfrac12\bigl(\Delta(G)-
    d_{G''}(u)\bigr)\bigr\rfloor\ge
    d_{G'}(v)+\bigl\lfloor\tfrac12d_{G'}(u)\bigr\rfloor\\[2pt]
    &=\max\{\,d_{G'}(u),d_{G'}(v)\,\}+
    \bigl\lfloor\tfrac12\min\{\,d_{G'}(u),d_{G'}(v)\,\}\bigr\rfloor.
  \end{align*}
  This time combining Observation~\ref{prop:choose} with
  Theorem~\ref{thm:bkwshannon} completes the proof.
\end{proof}

\section{An Approach using Gallai Trees}
\label{sec:gallai}

In this section, we use a result due independently to Borodin~\cite{Bor77}
and to Erd\H{o}s, Rubin and Taylor~\cite{ERT80}. This is a list version of
an older result of Gallai~\cite{Gal63} on colour-critical graphs. A
connected graph all of whose blocks are either complete graphs or odd
cycles is called a \emph{Gallai tree}.

\begin{theorem}[Borodin~\cite{Bor77}, Erd\H{o}s, Rubin \&
  Taylor~\cite{ERT80}]\label{thm:gallaitree}\mbox{}\\*
  Given a connected graph $G=(V,E)$, let $\ell(v)$, for $v\in V$, be an
  assignment of lists where each vertex~$v$ receives at least $d(v)$
  colours. Then there is a proper colouring of~$G$ using colours from the
  lists, unless $G$ is a Gallai tree and $|\ell(v)|=d(v)$ for all~$v$.
\end{theorem}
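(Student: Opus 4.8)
The plan is to prove, by induction on $|V(G)|$, that every connected $G$ with $|\ell(v)|\ge d(v)$ for all $v$ which is \emph{not} a Gallai tree having $|\ell(v)|=d(v)$ everywhere admits a proper colouring from the lists. The main tool is a ``surplus lemma'': if $G$ is connected and some vertex $v_{0}$ has $|\ell(v_{0})|\ge d(v_{0})+1$, then $G$ is colourable from the lists. This follows by a greedy argument: fix a breadth-first ordering $v_{0},v_{1},\dots$ rooted at $v_{0}$ and colour the vertices in the reverse order $\dots,v_{1},v_{0}$; every $v_{i}$ with $i\ge1$ has a neighbour (its breadth-first parent) still uncoloured when treated, hence sees at most $d(v_{i})-1$ colours, and $v_{0}$ retains a free colour at the end. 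In particular the surplus lemma settles every instance in which some list strictly exceeds its degree, so from now on I assume $|\ell(v)|=d(v)$ for all $v$ and, by hypothesis, that $G$ is not a Gallai tree; hence $G$ has a block $B$ that is neither a complete graph nor an odd cycle.

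The heart of the matter is the $2$-connected case $G=B$. If $G$ is an even cycle I colour it directly, using that even cycles are $2$-choosable: if all the lists coincide the cycle is properly $2$-colourable, and otherwise I pick an edge whose endpoints have distinct lists, colour one endpoint with a colour absent from the other endpoint's list, and colour the rest of the cycle as a path. If $G$ is not a cycle, I invoke the Lov\'asz-type structural lemma that a $2$-connected graph which is neither complete nor a cycle has a vertex $v$ with neighbours $x,y$ such that $xy\notin E(G)$ and $G-\{x,y\}$ is connected. I then colour $G-\{x,y\}$ greedily along an ordering that ends at $v$, having first coloured $x$ and $y$ --- choosing, when possible, a colour of $\ell(x)$ lying outside $\ell(v)$, and likewise for $y$. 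A short computation using $|\ell(u)|=d_{G}(u)$ shows every vertex other than $v$ then gets a free colour, and $v$ does too provided $x$ or $y$ was coloured outside $\ell(v)$, or $x$ and $y$ received a common colour. The residual case, in which $\ell(x)$ and $\ell(y)$ are forced to be disjoint subsets of $\ell(v)$ for every admissible triple, is the one point needing extra work: one either selects a better triple (varying $v$, $x$, $y$, or which vertex is placed last) or exploits the strong restrictions this places on $G$ and the lists to colour by hand.

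The general connected case is reduced to the $2$-connected one by a block-decomposition argument. Delete a leaf block $L$ of $G$ except for its cut vertex $c$, obtaining a smaller connected graph $G'$ in which $c$ has a surplus, since $|\ell(c)|=d_{G}(c)>d_{G'}(c)$. Colour $G'$ from the lists, using the surplus at $c$ both to ensure colourability and --- by restricting $\ell(c)$ appropriately and re-applying the induction hypothesis or the surplus lemma --- to \emph{steer} the colour of $c$ away from any prescribed small set of colours; then extend into $L$. When $L$ is complete or an odd cycle this extension amounts to list-colouring a $K_{n-1}$ or a path with (nearly) tight lists and one forbidden colour, which succeeds for the steered choice; when $L$ is itself a block that is neither complete nor an odd cycle the $2$-connected analysis applies within $L$. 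Here one must be careful to pick, when possible, a leaf block that is complete or an odd cycle, and to track when a smaller graph is again outside the excluded class --- using repeatedly that a one-point amalgamation is a Gallai tree if and only if both of its parts are.

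I expect the $2$-connected step to be the crux: pinning down the right reducible configuration, and above all dispatching the residual sub-case in which the lists nest too tightly around $v$. A secondary difficulty, running through the whole induction, is ensuring that each smaller graph to which the induction hypothesis is applied is genuinely not a tight-listed Gallai tree (or has acquired a surplus); it is precisely the coupling of ``being a Gallai tree'' with ``all lists tight'' that gives the statement its delicacy.
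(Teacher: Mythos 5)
You should first note that the paper itself does not prove Theorem~\ref{thm:gallaitree}: it is quoted as a classical result of Borodin and of Erd\H{o}s, Rubin and Taylor, with a pointer to Theorem~4.2 of Thomassen~\cite{Tho97}. Judged against the standard proofs of that result, your proposal has a genuine gap, and it sits exactly where you flag it. In the $2$-connected case you transplant the Lov\'asz proof of Brooks's theorem (a vertex $v$ with non-adjacent neighbours $x,y$ such that $G-\{x,y\}$ is connected), but that argument leans on all vertices sharing one palette, so that $x$ and $y$ can always receive a common colour. In the list setting, the case $\ell(x)\cap\ell(y)=\varnothing$ with $\ell(x)\cup\ell(y)\subseteq\ell(v)$ is not a residual nuisance to be ``coloured by hand'' or fixed by ``selecting a better triple'' --- it is the whole content of the theorem in the $2$-connected case, and you give no argument for it, nor any reason a better triple must exist. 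The classical proofs circumvent this with a different structural lemma (Rubin's block lemma): every $2$-connected graph that is neither complete nor a cycle contains an induced even cycle with at most one chord; such a subgraph $H$ is degree-choosable, and one colours $G-V(H)$ greedily in order of decreasing distance to $H$ (every vertex outside $H$ still has an uncoloured neighbour when treated) and finishes inside $H$. Without an ingredient of this kind your induction has no engine at its crux.

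The block-decomposition step is also weaker than you suggest, quantitatively. Deleting a leaf block $L$ with cut vertex $c$ gives $c$ a surplus of $d_L(c)$ in $G'$, so you can forbid at most $d_L(c)-1$ colours at $c$ while keeping $G'$ colourable by your surplus lemma; but if $L=K_m$ and the $m-1$ vertices of $L-c$ all carry the same tight list $B$ with $|B|=m-1$, then every colour of $B$ is bad for the extension into $L$ --- one more than your steering can exclude. The standard remedy is to run the greedy colouring in the opposite direction: locate one ``good'' spot (a surplus vertex, or a block that is neither a clique nor an odd cycle, which exists by hypothesis), colour all remaining vertices greedily in order of decreasing distance to it, so that each has an uncoloured neighbour and tight lists suffice, and only at the end colour the good block, where the surplus or the even-cycle-with-at-most-one-chord subgraph completes the colouring; no steering of cut-vertex colours is then needed. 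Your surplus lemma and the even-cycle case are fine, but as written the proposal does not prove the theorem.
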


\noindent
With this we prove Theorem~\ref{thm:gallai}, which implies
Theorem~\ref{thm:subcubic}.

\begin{proof}[Proof of Theorem~\ref{thm:gallai}]
  Assume to the contrary that the connected multigraph~$G$ and the
  non-negative integer~$k$ satisfy $\Delta(L(G))\le\Delta(G)+k$, but that,
  using the palette $\MK=[\Delta(G)+k]$, there is a proper
  edge-precolouring of~$G$ of the required type that does not extend to a
  proper edge-colouring of~$G$. For a vertex~$v$, let $K(v)\subseteq\MK$ be
  the set of colours appearing on the precoloured edges incident with~$v$,
  and set $k(v)=|K(v)|$.

  Let~$G'$ be obtained from~$G$ by deleting all precoloured edges. To each
  edge $e=uv$ in~$G'$, we assign a list $\ell(e)$ containing those colours
  in~$\MK$ not appearing on precoloured edges adjacent to~$e$ in~$G$. For
  any edge $e=uv$ in~$G'$ we obtain, using that
  $\Delta(G)+k\ge\Delta(L(G))\ge d_{L(G)}(e)$,
  \begin{equation}\label{eq1}
    \begin{array}{@{}r@{}l@{}}
      |\ell(e)|&{}=|\MK|-|K(u)\cup K(v)|=
      (\Delta(G)+k)-|K(u)\cup K(v)|\\[1mm]
      &{}\ge d_{L(G)}(e)-|K(u)\cup K(v)|\ge d_{L(G')}(e).
    \end{array}
  \end{equation}
  Since there is no extension of the precolouring of $L(G)$ to a full
  colouring of $L(G)$, it follows that $L(G')$ is not vertex-choosable with
  the lists $\ell(e)$, for $e\in E(G')$. In particular, there is a
  component~$C'$ of~$G'$ such that $L(C')$ is not vertex-choosable with the
  lists $\ell(e)$, for $e\in E(C')$. By Theorem~\ref{thm:gallaitree},
  $L(C')$ must be a Gallai tree such that $|\ell(e)|=d_{L(C')}(e)$ for
  every~$e$. This also means that we must have equality in all inequalities
  used to derive~\eqref{eq1}; in particular:
  \begin{subequations}
    \begin{align}
      &\text{for all $e\in E(C')$:}&&\hspace{-10mm}
      d_{L(G)}(e)=\Delta(L(G))=\Delta(G)+k;\label{eq2a}\\
      &\text{for all $e=uv\in E(C')$:}&&\hspace{-10mm}
      |\ell(e)|=d_{L(C')}(e)=d_{L(G)}(e)-|K(u)\cup K(v)|.\label{eq2b}
    \end{align}
  \end{subequations}
  Now note that $d_{C'}(v)+k(v)=d_G(v)\le\Delta(G)$ for every vertex~$v$.
  So, analogously to~\eqref{eq1} above, we infer that for each edge $e=uv$
  in~$C'$ the order of $\ell(e)$ is at least the degree in~$C'$ of each of
  its end-vertices:
  \begin{equation}\label{eq3}
    \begin{array}{@{}r@{}l@{}}
      |\ell(e)|&{}=(\Delta(G)+k)-|K(u)\cup K(v)|=
      (\Delta(G)+k)-k(u)-k(v)+|K(u)\cap K(v)|\\[1mm]
      &{}\ge d_{C'}(v)+(k-k(u))+|K(u)\cap K(v)|\ge d_{C'}(v).
    \end{array}
  \end{equation}

  We require the following statements.

  \begin{claim}\label{cl1}
    \ Every vertex in~$C'$ has at least two neighbours.
  \end{claim}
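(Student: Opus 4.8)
The plan is to argue by contradiction: suppose some vertex~$v$ of~$C'$ has exactly one neighbour in~$C'$, and derive a violation of one of the tight conditions \eqref{eq2a}, \eqref{eq2b}, \eqref{eq3}. First I would observe that, since $C'$ is connected and $L(C')$ is a nontrivial Gallai tree (it has at least one edge, since $L(C')$ is not choosable), $C'$ itself has at least one edge, so if $v$ has a unique neighbour $u$ in $C'$ then all edges of $C'$ incident with $v$ are parallel edges between $v$ and $u$; write $\mu_0$ for this multiplicity, so $d_{C'}(v)=\mu_0\ge1$. Pick one such edge $e=uv$. The key point is that the list $\ell(e)$ must, by \eqref{eq3}, have size at least $d_{C'}(u)$, and also by \eqref{eq3} exactly $d_{C'}(v)=\mu_0$ when that inequality chain is tight; combining with \eqref{eq2b}, $|\ell(e)|=d_{L(C')}(e)$, and the neighbourhood of $e$ in $L(C')$ consists of the other $\mu_0-1$ edges at $v$ together with the $d_{C'}(u)-\mu_0$ edges at $u$ not going to $v$ — wait, more carefully $d_{L(C')}(e)=(d_{C'}(u)-1)+(d_{C'}(v)-1)=d_{C'}(u)+\mu_0-2$. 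So the tight condition forces $|\ell(e)|=d_{C'}(u)+\mu_0-2$, while \eqref{eq3} forces $|\ell(e)|\ge d_{C'}(u)$; hence $\mu_0\ge2$.

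Next I would push the same computation to squeeze out the contradiction when $\mu_0\ge2$. Applying \eqref{eq3} with the other endpoint, $|\ell(e)|\ge d_{C'}(v)=\mu_0$ forces (via tightness of \eqref{eq3}, i.e. $k=k(u)$ and $K(u)\cap K(v)=\emptyset$) that $d_{L(C')}(e)=\mu_0$, so $d_{C'}(u)=2$. But then $u$ also has degree $2$ in $C'$, and the block of $L(C')$ containing the vertex $e$ is determined by the $\mu_0-1$ parallel copies of $e$ at $v$: these $\mu_0$ parallel $v$–$u$ edges form a $K_{\mu_0}$ in $L(C')$ (since they are pairwise adjacent), and if $\mu_0\ge3$ this $K_{\mu_0}$ is a block that is not a cut... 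I would instead use the cleaner route: in a Gallai tree every block is a complete graph or an odd cycle, and a vertex lying in a single block of a Gallai tree that is a leaf-block must have that block be everything only if $L(C')$ is a single block. So if $L(C')$ has more than one block, $e$ (as a vertex of $L(C')$) lies in a leaf block $B$; the other edges at $v$ together with $e$ all lie in $B$ (they are pairwise adjacent in $L(G)$), so $B\supseteq\{$the $\mu_0$ edges at $v\}$, forcing $d_{L(C')}$ restricted correctly; the edges at $u$ not at $v$ would have to lie in $B$ too since they are adjacent to $e$, but they are not adjacent to the other edges at $v$, contradicting that $B$ is complete unless $d_{C'}(u)=\mu_0$, i.e. $u$ has no other neighbour, i.e. $C'=\{u,v\}$ plus parallel edges. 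That leaves only the case $L(C')$ is itself a single block: a complete graph $K_{\mu_0}$ (all edges parallel between $u$ and $v$) or an odd cycle. A $K_{\mu_0}$ in $L(C')$ means $C'$ is exactly $\mu_0$ parallel edges; an odd cycle $C_{2t+1}=L(C')$ means $C'$ is a path (whose line graph is a path, not a cycle) or a cycle $C_{2t+1}$ in $G$ — but a path has a degree-$1$ vertex in $L$, contradiction, and an odd cycle in $G$ has all vertices of degree $2$, not matching a unique-neighbour vertex. So in every surviving case $C'$ consists of $\mu_0$ parallel edges on $\{u,v\}$, with no precoloured edges touching $u$ (from $k=k(u)$ and the degree bookkeeping). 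Finally I would note $d_G(u)=d_{C'}(u)=\mu_0$ and $d_G(v)=\mu_0+k(v)$, plug into \eqref{eq2a}: $d_{L(G)}(e)=2\mu_0+k(v)-2=\Delta(G)+k$; combined with the hypothesis $\Delta(L(G))\le\Delta(G)+k$ and $\Delta(G)\ge d_G(v)=\mu_0+k(v)$ this pins down $\mu_0,k,k(v)$ to a small-case analysis that lands either in excluded case~(b) (a triangle — which can't happen here since $C'$ has two vertices, so this forces $k(v)=\mu_0$, $k=\mu_0-1$ and a triangle elsewhere... ) or directly contradicts that the precolouring fails to extend, since a bunch of parallel edges with enough colours is trivially extendable.

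The main obstacle I expect is the block-structure bookkeeping in $L(C')$: translating "$v$ has a unique neighbour in $C'$" into a statement about which block of the Gallai tree $L(C')$ the edges at $v$ occupy, and then ruling out the odd-cycle blocks and the multi-block configurations cleanly. The degree equalities \eqref{eq2a}--\eqref{eq3} are tight simultaneously, so the arithmetic is rigid and should close the small cases, but one has to be careful that parallel edges in $G$ become a clique in $L(G)$ and handle the interaction with the excluded triangle case~(b) of Theorem~\ref{thm:gallai}. A cleaner phrasing, which I would adopt in the writeup, is: let $v$ have unique $C'$-neighbour $u$ with multiplicity $\mu_0=d_{C'}(v)$; by \eqref{eq3} applied at $v$, $|\ell(e)|\ge\mu_0$ with equality only if $k=k(u)$ and $K(u)\cap K(v)=\emptyset$, and by \eqref{eq2b} $|\ell(e)|=d_{L(C')}(e)=d_{C'}(u)+\mu_0-2$; so $d_{C'}(u)\le 2$, whence $d_{C'}(u)=2$ forces (again \eqref{eq3} at $u$, now the roles symmetric) that $C'$ is a cycle of length... and $d_{C'}(u)=1$ — impossible by the same argument applied to $u$ unless $C'$ is a single edge, i.e. $\mu_0=1$, already excluded. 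The residual cycle case is then dispatched because in a cycle every vertex has two neighbours, contradicting the choice of $v$, and in the multi-edge case $d_{C'}(u)=\mu_0=2$ forces $C'$ to be a double edge, handled by direct arithmetic against \eqref{eq2a}.
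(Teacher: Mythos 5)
There is a genuine gap here, and it originates in a miscount of the line-graph degree. If $v$ has $u$ as its unique neighbour in $C'$ and the edge $e=uv$ has multiplicity $\mu_0=d_{C'}(v)$, then the edges of $C'$ adjacent to $e$ are exactly the $\mu_0-1$ parallel copies of $e$ together with the $d_{C'}(u)-\mu_0$ edges at $u$ not going to $v$, so $d_{L(C')}(e)=d_{C'}(u)-1$. Your first count says precisely this, but you then ``correct'' it to $(d_{C'}(u)-1)+(d_{C'}(v)-1)=d_{C'}(u)+\mu_0-2$, which double-counts the $\mu_0-1$ parallel edges: the line graph is simple, so two edges sharing both endpoints are still joined by only one edge of $L(C')$ (this is also how the paper later gets $\Delta(L(G))=m(uv)+m(uw)+m(vw)-1$ for the triangle). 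With the correct count the claim is immediate and needs none of the ensuing machinery: $|\ell(e)|=d_{L(C')}(e)=d_{C'}(u)-1<d_{C'}(u)$ contradicts \eqref{eq3}. That one-line argument is exactly the paper's proof.

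Because of the wrong formula you only extract $\mu_0\ge2$ and are then pushed into a block-structure analysis that never closes. Along the way you repeatedly invoke ``tightness of \eqref{eq3}'' --- for instance to force $d_{L(C')}(e)=\mu_0$ and hence $d_{C'}(u)=2$ --- but nothing in the setup forces equality in \eqref{eq3}; only the chain of inequalities in \eqref{eq1} is known to be tight, which is what \eqref{eq2a} and \eqref{eq2b} record. The final step (``this pins down $\mu_0,k,k(v)$ to a small-case analysis that lands either in excluded case~(b) \dots or directly contradicts \dots'') is left unresolved, with the two-vertex configuration never actually reconciled with the triangle exception. So as written the proposal does not establish the claim; the repair is simply to recompute $d_{L(C')}(e)$ for a vertex with a unique neighbour, after which the rest of the argument becomes unnecessary.
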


  \begin{proof}
    Suppose to the contrary that the vertex~$u$ has the vertex~$v$ as its
    unique neighbour. Then for the edge $e=uv$ we have
    $d_{L(C')}(e)=d_{C'}(v)-1$ (this holds even if~$uv$ is a multi-edge).
    But since $|\ell(e)|=d_{L(C')}(e)$, this gives $|\ell(e)|<d_{C'}(v)$,
    contradicting~\eqref{eq3}.
  \end{proof}

  \begin{claim}\label{cl2}
    \ If~$C'$ is a simple odd cycle, then $k=0$ and $G=C'$.
  \end{claim}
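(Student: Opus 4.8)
The plan is to argue that if the line graph $L(C')$ is a simple odd cycle, then $C'$ must itself be a simple cycle, after which the equality conditions \eqref{eq2a} and \eqref{eq3} will force $k=0$ and $C'=G$. First I would observe that when $L(C')$ is a cycle, every edge $e$ of $C'$ satisfies $d_{L(C')}(e)=2$, so by Claim~\ref{cl1} (which says each vertex of $C'$ has at least two neighbours) and the elementary identity $d_{L(C')}(uv)=d_{C'}(u)+d_{C'}(v)-2$ for a non-multi-edge (and $d_{C'}(u)+d_{C'}(v)-3$ for a multi-edge), one concludes $d_{C'}(u)=d_{C'}(v)=2$ for every edge $uv$, and that $C'$ has no multi-edges. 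Hence $C'$ is $2$-regular and simple, i.e.\ $C'$ is a simple cycle; since $L(C')$ is an odd cycle and $L$ of a cycle is a cycle of the same length, $C'$ is a simple \emph{odd} cycle.

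Next I would feed this back into the inequality chain. For every edge $e=uv$ of $C'$ we now have $d_{C'}(v)=2$ and, from \eqref{eq2a}, $d_{L(G)}(e)=\Delta(G)+k$; but $d_{L(C')}(e)=2$, so \eqref{eq2b} gives $|K(u)\cup K(v)|=\Delta(G)+k-2$. On the other hand, each vertex $v$ of $C'$ has $d_G(v)=d_{C'}(v)+k(v)=2+k(v)\le\Delta(G)$, so $k(v)\le\Delta(G)-2$, and since $|K(u)\cup K(v)|\le k(u)+k(v)\le 2(\Delta(G)-2)$ we need $\Delta(G)+k-2\le 2\Delta(G)-4$, i.e.\ $k\le\Delta(G)-2$ — not yet a contradiction, so I must push harder using \eqref{eq3}. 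Tracking the two slack terms in \eqref{eq3} with $d_{C'}(v)=2$: we get $|\ell(e)|=2$ forces $k=k(u)$ \emph{and} $K(u)\cap K(v)=\emptyset$ for every edge $uv$ of the cycle. In particular $k(v)=k$ for every vertex $v$ of $C'$, and consecutive vertices around the cycle have disjoint colour sets $K(v)$, each of size $k$.

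The final step handles the odd-cycle parity obstruction, which I expect to be the only real subtlety. If $k\ge 1$, pick a colour $c\in K(v_0)$ for some vertex $v_0$ of the cycle $C'=v_0v_1\cdots v_{n-1}v_0$ with $n$ odd; $c$ lies on a precoloured edge at $v_0$, but that precoloured edge is not an edge of $C'$ (its edges are uncoloured). Here I would need to track where colour $c$ can reappear: the precoloured edges form, together with $C'$, the graph $G$, and a short argument about the structure near $v_0$ — using that $C'$ is an \emph{induced} subgraph of $G$ only on its own vertex set is not automatic, so the cleanest route is to note $K(v_{i})\cap K(v_{i+1})=\emptyset$ forces the sets $K(v_0),K(v_1),\dots$ to alternate in a way incompatible with odd $n$ unless all $K(v_i)$ are empty, i.e.\ $k=0$. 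Concretely, since each $|K(v_i)|=k$ and consecutive ones are disjoint inside the $(\Delta(G)+k)$-palette, if $k\ge 1$ one can $2$-colour the vertices of the odd cycle by ``$K(v_i)$-type'' and reach a contradiction exactly as in the proof that odd cycles are not $2$-choosable; alternatively, one observes $k(v_i)=k\ge1$ means every vertex of $C'$ is incident to a precoloured edge, but then following such precoloured edges out of $C'$ and using $\Delta(L(G))=\Delta(G)+k$ at \eqref{eq2a} yields a degree contradiction at the far endpoints. Either way we conclude $k=0$; and then $d_G(v)=d_{C'}(v)=2$ for all $v\in V(C')$, so no edge of $G$ leaves $C'$, whence by connectedness of $G$ we get $G=C'$, as claimed.
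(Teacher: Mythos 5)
Your reduction to the set-theoretic constraints is fine up to a point: from tightness in \eqref{eq2a}--\eqref{eq3} you correctly extract that $k(v_i)=k$ for every vertex of the cycle and that $K(v_i)\cap K(v_{i+1})=\varnothing$ for consecutive vertices. The gap is the final parity step. Disjointness of consecutive sets around an odd cycle does \emph{not} force the sets to be empty: take $k=1$, $\Delta(G)=3$, palette $[4]$, $C'$ a triangle with $K(v_0)=\{1\}$, $K(v_1)=\{2\}$, $K(v_2)=\{3\}$. All of your derived constraints ($|K(v_i)|=k$, consecutive disjointness, $|K(v_i)\cup K(v_{i+1})|=\Delta(G)+k-2$, $d_G(v_i)=\Delta(G)$) hold with $k\ge1$, so no contradiction can follow from them alone. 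There is no well-defined two-colouring of the cycle by ``$K(v_i)$-type'', because the sets need not alternate between two values. Your alternative route (``following precoloured edges out of $C'$ \dots\ yields a degree contradiction at the far endpoints'') is not an argument: \eqref{eq2a} applies only to edges of $C'$ and constrains nothing at the far endpoints of the precoloured edges.

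The missing ingredient is the hypothesis that $L(C')$ is \emph{not} colourable from the lists $\ell(e)$. An odd cycle with lists of size~$2$ fails to be list-colourable only when all the lists are identical; this is the step the paper invokes. It gives that $\MK\setminus(K(v_i)\cup K(v_{i+1}))$ is the same $2$-set for every $i$, hence $K(v_i)\cup K(v_{i+1})=S$ for a \emph{fixed} set $S$, hence $K(v_{i+1})=S\setminus K(v_i)$. Only now does parity bite: going once around the odd cycle yields $K(v_0)=S\setminus K(v_0)$, which forces $S=\varnothing$ and so $k=0$. (In the configuration above the lists are $\{3,4\},\{1,4\},\{2,4\}$, which are not identical, and the triangle $L(C')$ is indeed colourable from them --- consistent with your constraints being satisfiable for $k\ge1$.) Your concluding step ($k=0$ implies there are no precoloured edges, so $G=C'$ by connectedness) is fine, and your opening paragraph is unnecessary since the claim hypothesises directly that $C'$ (not $L(C')$) is a simple odd cycle.
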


  \begin{proof}
    Suppose that~$C'$ is a simple odd cycle. If $e=uv$ is an edge in~$C'$,
    then $|\ell(e)|=d_{L(C')}(e)=d_{C'}(v)=2$. From this we can assume, by
    permuting the colours, that $\ell(e)=\{1,2\}$ for every $e\in L(C')$.
    (Indeed, the only way to assign lists of length~$2$ to the edges of an
    odd cycle in such a way that there is no proper colouring of the cycle
    using colours from the lists is by making all lists identical.) There
    must also be equality everywhere in~\eqref{eq3}. Combining that
    with~\eqref{eq2b} means in particular that for every edge $e=uv$ we
    have $K(u)\cup K(v)=\{\,3,4,\dotsc,\Delta(G)+k\,\}$ and
    $K(u)\cap K(v)=\varnothing$. By an easy parity argument, we can see
    that this is only possible if all the sets $K(u)$, $u\in V(C')$, are
    empty. This means that $k=0$ (and $\Delta(G)=2$). Since~$G$ is
    connected, if there are no precoloured edges, then~$G$ can have only
    one component, which must be~$C'$.
  \end{proof}

  \noindent
  We continue by considering the case that~$C'$ is not an odd cycle. Since
  line graphs are claw-free, it follows that odd cycle blocks of length at
  least five are impossible in $L(C')$. We deduce that all blocks of
  $L(C')$ are cliques. The only way that a leaf block~$B$ of $L(C')$ could
  be part of a nontrivial block structure is if it corresponds to a set of
  edges in~$C'$ that are all incident with a unique vertex, with one of the
  edges corresponding to the cut-vertex of~$B$. This is ruled out by
  Claim~\ref{cl1}. We conclude that $L(C')$ must itself be a clique. In
  turn, the only way that a line graph $L(C')$ of a multigraph is a clique
  is if~$C'$ is a star or a triangle, with possibly multiple edges. The
  first option is ruled out by Claim~\ref{cl1}, so~$C'$ must be a triangle,
  possibly with multi-edges.

  Let $u,v,w$ be the vertices in~$C'$ and set $m=|E(C')|$. Then for all
  $e\in E(C')$ we have $|\ell(e)|=d_{L(C')}(e)=m-1$. It is easy to check
  that with lists of this size, the only way that~$C'$ is not
  edge-choosable is if all the lists are the same. This also means that the
  sets $K(u)\cup K(v)$, $K(u)\cup K(w)$ and $K(v)\cup K(w)$ are the same.

  Let~$A(u)$ be the set of colours that appear on precoloured edges
  incident with~$u$, but not with~$v$ or~$w$; define $A(v)$ and $A(w)$
  analogously. (In other words, these are colours on the edges that
  connect~$C'$ to the rest of the graph~$G$.) Let~$D$ be the set of colours
  that appear on precoloured edges with end-vertices contained in
  $\{u,v,w\}$. From~\eqref{eq2a} and~\eqref{eq2b} we deduce that
  $|\ell(e)|=d_{L(C')}(e)$ for every edge $e$ in~$C'$, which, applied to an
  edge between~$u$ and~$v$, implies that $A(u)\cap A(v)=\varnothing$,
  $A(u)\cap D=\varnothing$ and $A(v)\cap D=\varnothing$. By symmetry,
  $A(v)\cap A(w)=\varnothing$, $A(u)\cap A(w)=\varnothing$ and
  $A(w)\cap D=\varnothing$.

  Now recall that all edges in~$C'$ must have the same list. Consequently,
  the disjointness of the sets~$A(u)$, $A(v)$ and~$A(w)$ implies that these
  three sets are empty. Thus we find that there are no precoloured edges
  between any of $u,v,w$ and the rest of the graph. Since~$G$ is connected,
  it follows that $V(G)=\{u,v,w\}$. Let $m(uv),m(uw),m(vw)$ be the
  multiplicities of the edges of~$G$. Then
  $\Delta(L(G))=m(uv)+m(uw)+m(vw)-1$, while
  $\Delta(G)=m(uv)+m(uw)+m(vw)-\min\{\,m(uv),m(uw),m(vw)\,\}$. Since
  $\Delta(L(G))=\Delta(G)+k$, we have shown that part~(b) of the statement
  of the theorem holds, completing the proof.
\end{proof}

\section{Planar Graphs}
\label{sec:planar}

In this section, for brevity we usually write~$\Delta$ for $\Delta(G)$.

In the next subsection we prove Conjecture~\ref{conj:main} for planar
graphs of large enough maximum degree (at least~$17$), which is the
assertion of Theorem~\ref{thm:planar}. As mentioned earlier, the LCC is
known to hold for planar graphs with maximum degree at least~$12$. This is
yet another result of Borodin, Kostochka and Woodall~\cite{BKW97}: they
indeed show that $\ch'(G)\le\Delta$ for such graphs~$G$. Combining this
with Observation~\ref{cor:choose} gives the bounds in lines~3 and~7 of
Table~\ref{tab:planar}. Since the former bound will be useful for us later
on, let us state it formally.

\begin{proposition}\label{obs:delta+2}\mbox{}\\*
  Let $G$ be a planar graph with maximum degree $\Delta(G)\ge12$. Using the
  palette $\MK=[\Delta(G)+2]$, any precoloured matching can be extended to
  a proper edge-colouring of all of~$G$.
\end{proposition}

\noindent
Borodin~\cite{Bor90} showed that $\ch'(G)\le\Delta+1$ for planar graphs~$G$
of maximum degree $\Delta\ge9$. Recently, Bonamy~\cite{Bon15} extended this
last statement to the case $\Delta=8$. Combining this result with
Observation~\ref{cor:choose} implies that for planar graphs with maximum
degree $\Delta\ge8$ a precoloured matching can be extended to a proper
colouring of the entire graph with the palette $[\Delta+3]$, while a
precoloured distance-$2$ matching can be extended with the palette
$[\Delta+2]$.

For smaller values of~$\Delta$, we can use Theorems~\ref{thm:shannonmain}
and~\ref{thm:subcubic}, and the result of Juvan, Mohar and
\v{S}krekovski~\cite{JMS99} that $\ch'(G)\le\Delta(G)+1$ for a planar
graph~$G$ with $\Delta(G)\le4$, to achieve several of the bounds in
Table~\ref{tab:planar}. In particular, it follows that $\Delta+4$ colours
suffice for any planar graph with maximum degree~$\Delta$.

The final proof we present is of Theorem~\ref{thm:planar2}. As discussed in
Subsection~\ref{sub:background}, Vizing conjectured~\cite{Viz65} that any
planar graph with maximum degree $\Delta\ge6$ has a
$\Delta$-edge-colouring. The examples in Figure~\ref{fig:extree} show that
this statement is false if we allow an adversarial precolouring of a
distance-$2$ matching. But does it remains true with the adversarial
precolouring of any distance-$3$ matching? We prove that this is indeed the
case if $\Delta\ge23$. We expect that this lower bound on~$\Delta$ can be
reduced, though, as noted before, certainly not below~$6$.

\medskip
The proofs of Theorems~\ref{thm:planar} and~\ref{thm:planar2} can be found
in the next two subsections. They use a common framework, terminology and
notation, which we outline now. Note that both adapt a nice trick of Cohen
and Havet~\cite{CoHa10}, which shortens the argument considerably.

Whenever considering a planar graph~$G$, we fix a drawing of~$G$ in the
plane. (So we really should talk about a \emph{plane graph}.) Because of
this fixed embedding we can talk about the \emph{faces} of the graph.
If~$G$ is connected, then the boundary of any face~$f$ forms a closed
walk~$W_f$.

We adopt the following notation to classify the vertices of a graph~$G$
according to their degree and their incidence with vertices of degree~$1$. Let~$V_i$ be
the set of vertices of degree~$i$. Also, identify by $T_i\subseteq V_i$ the
set of those vertices of degree~$i$ that are adjacent to a vertex of
degree~$1$, and set $U_i=V_i\setminus T_i$. Write $T=\bigcup_{i\ge1}\!T_i$
and $U=V(G)\setminus T$. We also adopt the shorthand notation $V_{[i,j]}$,
$U_{[i,j]}$ and $T_{[i,j]}$ to mean, respectively, the sets of vertices
in~$V$, $U$ and~$T$ with degrees between~$i$ and~$j$ inclusively.

\subsection{Proof of Theorem~\ref{thm:planar}}

If~$G$ is not connected, then we extend the edge-colouring one component at
a time. The colouring of a component~$C$ with $\Delta(C)\le16$ can be
extended using the results on lines 1\,--\,3 of Table~\ref{tab:planar}.
Next, the statement of Theorem~\ref{thm:planar} is true for graphs with
maximum degree~$17$ and exactly~$17$ edges. We use induction on $E(G)$, and
proceed with the induction step. So we may assume that~$G$ is connected and
has at least~$18$ vertices, since $\Delta\ge17$. Let~$M$ be a precoloured
matching.

We first observe that
\begin{equation}\label{eq:sum1}
  \text{if $uv\in E(G)\setminus M$, then $d(u)+d(v)\ge\Delta+3$.}
\end{equation}
Indeed, suppose that the inequality does not hold for some edge
$uv\notin M$. Then, by induction if $\Delta(G-uv)\ge17$ and by
Proposition~\ref{obs:delta+2} if $\Delta(G-uv)=16$, there exists an
extension of~$M$ to a colouring of all $G-uv$ using the palette $\MK$.
Since at most~$\Delta$ colours are used on the edges adjacent to~$uv$, we
can easily extend the colouring further to~$uv$. It follows from this
observation that~$G$ has no vertices of degree~$2$, that every vertex with
degree~$1$ is incident with an edge in~$M$ and that any vertex has at most
one neighbour of degree~$1$. We will use these facts often without
reference in the remainder of the proof.

For a face~$f$, let $V^-(f)=V(f)\setminus V_1$, and denote by $W^-_f$ the
sequence of vertices on the boundary walk~$W_f$ after removing vertices
from~$V_1$. For a vertex~$v$, let $v_1,v_2,\dotsc,v_{d(v)}$ be the
neighbours of~$v$, listed in clockwise order according to the drawing
of~$G$. Write~$f_i$ for the face incident with~$v$ lying between the edges
$vv_i$ and $vv_{i+1}$ (taking addition modulo $d(v)$ in
$\{1,\dotsc,d(v)\}$).

If $v\in T$ has a (unique) neighbour in~$V_1$, then we always choose~$v_1$
to be this neighbour. In that case we have $f_{d(v)}=f_1$; we denote that
face by~$f_1$ again. Note that it is possible for other faces to be the
same as well (if~$v$ is a cut-vertex), but we will not identify those
multiple names of the same face. So, if $v\in U$, then the faces around~$v$
in consecutive order are $f_1,f_2,\dotsc,f_{d(v)}$; while, if $v\in T$,
then the faces around~$v$ are $f_1,f_2,\dotsc,f_{d(v)-1}$.

\begin{claim}\label{claim:vdeltav3}
  \ $|V_\Delta|>|V_3|$.
\end{claim}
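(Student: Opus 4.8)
The plan is to use a discharging argument on the fixed plane embedding of $G$, exploiting the structural restrictions already forced by~\eqref{eq:sum1}: no vertices of degree~$2$, every degree-$1$ vertex sits on an edge of~$M$, every vertex has at most one neighbour of degree~$1$, and every edge $uv\notin M$ satisfies $d(u)+d(v)\ge\Delta+3$. The last inequality is the workhorse: since $\Delta\ge17$, any edge not in~$M$ joining a vertex of degree~$\le 3$ must go to a vertex of degree $\ge\Delta$. So every vertex in $V_3$ has all but possibly one of its incident edges (the exception being a single edge of~$M$, or a single edge to a degree-$1$ neighbour) going to $V_\Delta$. I would first record this: each $v\in V_3$ has at least two neighbours in $V_\Delta$, and in fact I expect one needs the sharper statement that each $v\in U_3$ has at least two neighbours in $V_\Delta$ joined to it by edges outside~$M$, while each $v\in T_3$ (adjacent to a degree-$1$ vertex) has its other two neighbours in $V_\Delta$.

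Next I would set up charges. Assign to each vertex $v$ the charge $d(v)-4$ and to each face $f$ the charge $|W_f|-4$ (length of the boundary walk minus~$4$), so that by Euler's formula the total charge is $-8<0$; here I would restrict attention to the reduced faces, working with $W^-_f$ and $V^-(f)$ as set up in the preamble, since degree-$1$ vertices contribute in a controlled way. The discharging rule would send charge from vertices of $V_\Delta$ to nearby vertices of $V_3$ (and to small faces), the point being that a $\Delta$-vertex has a large positive charge $\Delta-4\ge 13$ to give away, while each $3$-vertex needs only~$1$ unit and each degree-$1$ vertex's deficit can be absorbed by its unique large-degree-ish neighbour. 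After discharging, every vertex and face should have non-negative charge, contradicting the negative total — \emph{unless} $|V_\Delta|\le |V_3|$ fails to be violated, i.e.\ the contradiction is exactly what forces $|V_\Delta|>|V_3|$.

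The cleanest way to package this is probably by contradiction: assume $|V_\Delta|\le|V_3|$ and derive that the charges can be balanced, contradicting negativity of the total. Concretely, if each $v\in V_\Delta$ hands out (say) at most a bounded amount and each $v\in V_3$ receives enough, a counting inequality of the form $\sum_{v\in V_\Delta}(d(v)-4)\;\gtrsim\;\sum_{v\in V_3}(4-d(v))\;+\;(\text{face and degree-1 deficits})$ would follow from $|V_\Delta|\le|V_3|$ together with $\Delta-4\ge 13\gg 1$, and this would push the total charge to be $\ge 0$. The arithmetic slack is generous because $\Delta\ge 17$, so I do not expect the constants to be delicate.

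The main obstacle will be bookkeeping the faces and the degree-$1$ vertices correctly: a $3$-vertex can lie on short faces (triangles, quadrilaterals) where it is sandwiched between two $\Delta$-vertices, and one must make sure the discharging rule does not route the same unit of charge out of a $\Delta$-vertex twice, nor leave a short face with negative charge. Getting a rule for which \emph{every} face ends non-negative — in particular triangular faces with three high-degree vertices are fine, but faces incident with several $3$-vertices need those $3$-vertices to be paid by their $\Delta$-neighbours rather than by the face — is where the care goes. Since this claim is only an ingredient in the larger proof of Theorem~\ref{thm:planar}, I would aim for the crudest rule that works rather than an optimal one.
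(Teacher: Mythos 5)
There is a genuine gap: your argument never uses the induction hypothesis beyond~\eqref{eq:sum1}, and planarity plus~\eqref{eq:sum1} is not enough to force $|V_\Delta|>|V_3|$. The paper's proof hinges on a second reducible configuration: an even cycle $C$ alternating between $V_3$ and $V_\Delta$ whose edges avoid $M$. By induction the precolouring extends to $G-C$, and each edge of $C$ then sees only $\Delta-1$ coloured edges, so (even cycles being $2$-edge-choosable) the colouring extends to $C$ as well. Hence the bipartite graph $F$ of non-matching edges between $V_3$ and $V_\Delta$ is a \emph{forest}, and the claim drops out of pure counting: $2|V_3|\le|F|<|V_3|+|V_\Delta|$. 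Note that planarity plays no role in this step. Without the acyclicity of $F$, the best planarity gives is the bipartite bound $|F|\le 2(|V_3|+|V_\Delta|)-4$, which together with $|F|\ge 2|V_3|$ yields only the vacuous $|V_\Delta|\ge2$; so your observation that each $v\in V_3$ has at least two $V_\Delta$-neighbours cannot be converted into the claim by Euler-type counting alone.

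Moreover, your discharging framework points in the wrong direction. Euler's formula makes the total charge negative, and a discharging argument that ends with all vertices and faces non-negative except for a deficit of $-c$ on $V_\Delta$ and a surplus of $+c$ on $V_3$ proves $|V_3|>|V_\Delta|$, not the reverse; indeed that is exactly what the paper's \emph{main} discharging (with the charges $\beta$) establishes for a minimal counterexample, and the present claim is the ingredient that contradicts it. A degree-$3$ vertex starts with negative charge in your normalisation and is a sink, so arranging for every $3$-vertex to finish with strictly positive charge while also covering the face deficits is not a matter of ``generous arithmetic slack''; it is structurally the wrong way round. You need the colouring-theoretic reduction of even cycles, not a counting argument from the embedding.
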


\begin{proof}
  Consider the set~$F$ of edges in $E(G)\setminus M$ with one end-vertex
  in~$V_3$ and the other in~$V_\Delta$. The subgraph with vertex set
  $V_3\cup V_\Delta$ and edge set~$F$ is bipartite; we assert it is
  acyclic. For suppose there exists an (even) cycle~$C$ with
  $E(C)\subseteq F$. By induction if $\Delta(G-E(C))\ge17$ and by
  Proposition~\ref{obs:delta+2} if $\Delta(G-E(C))\in\{15,16\}$, we can
  extend the precolouring of $M$ to $G-E(C)$ using the palette~$\MK$. But
  then we can further extend this colouring to the edges in~$C$, since each
  edge in~$C$ is adjacent to only $\Delta-1$ coloured edges, and even
  cycles are $2$-edge-choosable.

  Since each vertex in~$V_3$ is incident with at least two edges in~$F$, we
  have $|V_\Delta|+|V_3|>|F|\ge2|V_3|$. The claim follows.
\end{proof}

\noindent
We use a discharging argument to continue the proof of the theorem. First,
let us assign to each vertex~$v$ a charge

\smallskip
\qitem{$\alpha1$:} $\alpha(v)=3d(v)-6$,

\smallskip\noindent
and to each face~$f$ a charge

\smallskip
\qitem{$\alpha2$:} $\alpha(f)=-6$.

\smallskip
For each vertex~$v$ we define~$\beta(v)$ as follows.

\smallskip
\qitem{$\beta1$:} If $v\in V_\Delta$, then $\beta(v)=-2$.

\smallskip
\qitem{$\beta2$:} If $v\in V_3$, then $\beta(v)=2$.

\smallskip
\qitem{$\beta3$:} In all other cases, $\beta(v)=0$.

\smallskip
For each edge~$e=vu$, we define~$\gamma_e(v)$ and $\gamma_e(u)$ as follows.

\smallskip
\qitem{$\gamma1$:} If $v\in V_1$, then $\gamma_e(v)=-\gamma_e(u)=3$.

\smallskip
\qitem{$\gamma2$:} If $v,u\notin V_1$, then $\gamma_e(v)=\gamma_e(u)=0$.

\smallskip
Finally, for each face~$f$ and vertex~$v\in W^-_f$ we define~$\delta_f(v)$
and~$\delta_v(f)$ as follows.

\smallskip
\qitem{$\delta1$:} If $v\in T_3$, then $\delta_v(f)=-\delta_f(v)=1$.

\smallskip
\qitem{$\delta2$:} If $v\in U_3$, then $\delta_v(f)=-\delta_f(v)=\tfrac53$.

\smallskip
\qitem{$\delta3$:} If $v\in T$ and $4\le d(v)\le\Delta-2$, then
$\delta_v(f)=-\delta_f(v)=3-\dfrac{6}{d(v)-1}$.

\smallskip
\qitem{$\delta4$:} If $v\in U$ and $4\le d(v)\le\Delta-2$, then
$\delta_v(f)=-\delta_f(v)=3-\dfrac{6}{d(v)}$.

\smallskip
\qitem{$\delta5$:} If $d(v)\ge\Delta-1$, $|V^-(f)|=3$, and both neighbours
of~$v$ in $V^-(f)$ are vertices in $U_{[3,8]}$ that are joined by an edge
in~$M$, then $\delta_v(f)=-\delta_f(v)=3$.

\smallskip
\qitem{$\delta6$:} If $d(v)\ge\Delta-1$, $|V^-(f)|=3$, and~$v$ has a
neighbour in $V^-(f)\cap T_{[3,6]}$, then
$\delta_v(f)=-\delta_f(v)=\tfrac52$.

\smallskip
\qitem{$\delta7$:} If $d(v)\ge\Delta-1$, $|V^-(f)|=3$, none of~$\delta5$
and $\delta6$ applies, and~$v$ has a neighbour in $V^-(f)\cap U_{[3,5]}$,
then $\delta_v(f)=-\delta_f(v)=\tfrac94$.

\smallskip
\qitem{$\delta8$:} If $d(v)\ge\Delta-1$, $|V^-(f)|=3$, and none
of~$\delta5$, $\delta6$ and~$\delta7$ applies, then
$\delta_v(f)=-\delta_f(v)=2$.

\smallskip
\qitem{$\delta9$:} If $d(v)\ge\Delta-1$, $|V^-(f)|\ge4$, and~$v$ has a
neighbour in $V^-(f)\cap{T_{[3,6]}}$, then $\delta_v(f)=-\delta_f(v)=2$.

\smallskip
\qitem{$\delta10$:} If $d(v)\ge\Delta-1$, $|V^-(f)|\ge4$, and~$\delta9$
does not apply, then $\delta_v(f)=-\delta_f(v)=\tfrac32$.

\smallskip
For a vertex~$v$, write $\gamma(v)$ for the sum of $\gamma_e(v)$ over all
edges~$e$ that have~$v$ as an end-vertex. For a vertex~$v$ of degree~$1$ we
set $\delta(v)=0$. For every other vertex~$v$, write $\delta(v)$ for the
sum over the faces~$f$ around~$v$ of $\delta_f(v)$. Similarly, for a
face~$f$, write $\delta(f)$ for the sum over the vertices~$v$ on the
reduced walk~$W^-_f$ around~$f$ of the values of $\delta_v(f)$.

By the definitions of~$\gamma$ and~$\delta$,
\[\textstyle\sum_v\gamma(v)+\sum_v\delta(v)+\sum_f\delta(f)=0.\]
It follows from Claim~\ref{claim:vdeltav3} that
\[\textstyle\sum_v\beta(v)<0.\]
Finally, from Euler's formula for simple plane graphs, we obtain
\[\textstyle\sum_v\alpha(v)+\sum_f\alpha(f)<0.\]
Thus, in order to reach a contradiction, it is enough to show that for
every vertex~$v$:
\begin{equation}\label{eq:discvertex}
  \alpha(v)+\beta(v)+\delta(v)+\gamma(v)\ge0,
\end{equation}
and that for every face~$f$:
\begin{equation}\label{eq:discregion}
  \alpha(f)+\delta(f)\ge0.
\end{equation}

Let~$f$ be a face. As~$G$ is simple, $|V^-(f)|\ge3$. Since $\alpha(f)=-6$,
to establish~\eqref{eq:discregion} it is enough to show that
$\delta(f)\ge6$. Let~$v$ be a vertex in $V^-(f)$ for which $\delta_v(f)$ is
minimum. If $\delta_v(f)\cdot|V^-(f)|\ge6$, then~\eqref{eq:discregion}
clearly holds, and so we only need to deal with cases
$\delta1$\,--\,$\delta4$. Also, if
$v\in T_{[7,\Delta-2]}\cup U_{[6,\Delta-2]}$, then~$\delta3$ and~$\delta4$
give $\delta_v(f)\ge2$, and hence again~\eqref{eq:discregion} is verified.

If $v\in T_{[3,4]}$, then $\delta_v(f)=1$ by~$\delta1$ or~$\delta3$ and,
by~\eqref{eq:sum1}, the neighbours~$u$ and~$w$ of~$v$ in $V^-(f)$ have
degree at least $\Delta-1$. If $|V^-(f)|=3$, then~$\delta6$ applies to
both~$u$ and~$w$, so $\delta_u(f)=\delta_w(f)=\tfrac52$. If $|V^-(f)|\ge4$,
then~$\delta9$ applies to both~$u$ and~$w$, so $\delta_u(f)=\delta_w(f)=2$,
while a fourth vertex~$z$ in $V^-(f)$ satisfies $\delta_z(f)\ge1$ by the
definition of~$v$. So~\eqref{eq:discregion} always follows.

If $v\in T_{[5,6]}$, then $\delta_v(f)\ge\tfrac32$, so we may assume that
$|V^-(f)|=3$ (as $\delta_v(f)\le\delta_u(f)$ whenever $u\in V^-(f)$).
Moreover, $v$ has neighbours $u,w$ in $V^-(f)$ with degree at least
$\Delta-3\ge9$ as $\Delta\ge12$. If $d(u)\ge\Delta-1$, then~$\delta6$ gives
$\delta_u(f)=\tfrac52$. If $d(u)\in\{\Delta-3,\Delta-2\}$, then~$\delta3$
and~$\delta4$ give $\delta_u(f)\ge\tfrac94$, as $\Delta\ge12$. Since
similar bounds hold for~$\delta_w(f)$, we deduce that~\eqref{eq:discregion}
holds.

We are left with the case where $v\in U_{[3,5]}$. By~$\delta2$
and~$\delta4$ we find that $\delta_v(f)\ge\tfrac32$, and hence we again
only have to consider the case where $|V^{-}(f)|=3$. Rules
$\delta3$\,--\,$\delta7$ ensure that any other vertex~$u$ in $V^-(f)$ with
$d(u)\ge9$ satisfies $\delta_u(f)\ge\tfrac94$. So we may suppose that there
is a vertex $u\in V^-(f)$ with $d(u)\le8$. Since $|V^-(f)|=3$, we must in
fact have $uv\in E(G)$. Moreover, as $\Delta\ge11$, we know
by~\eqref{eq:sum1} that the edge~$uv$ belongs to the matching~$M$. This
means that $u\in U_{[3,8]}$. Let~$w$ be the third vertex in $V^-(f)$. If
$v\in U_{[3,4]}$, then $d(w)\ge\Delta-1$ by~\eqref{eq:sum1}, and so
$\delta_w(f)=3$ by~$\delta5$, confirming~\eqref{eq:discregion}. As the
final case, assume that $v\in U_5$ and recall that
$\delta_u(f)\ge\delta_v(f)=\tfrac95$. Since also $d(w)\ge\Delta-2$, one of
$\delta3$\,--\,$\delta5$ applies to~$w$, yielding that
$\delta_w(f)\ge\tfrac{12}{5}$, as $\Delta\ge13$. So again $\delta(f)\ge6$,
confirming~\eqref{eq:discregion} for all faces.

\medskip
Now let~$v$ be a vertex. Recall the convention that if $v\in T$, then the
two consecutive faces incident with both~$v$ and its neighbour of
degree~$1$ are counted as one face, while all other faces are counted
separately.

If $d(v)=1$, then $\alpha(v)=-3$ and $\gamma(v)=3$. Since
$\beta(v)=\delta(v)=0$, we immediately obtain~\eqref{eq:discvertex}.

Recall that~$G$ has no vertices of degree~$2$. If $d(v)=3$, then
$\alpha(v)=3$, while $\beta(v)=2$ by~$\beta2$. If $v\in T_3$, then
$\gamma(v)=-3$ and~$\delta1$ implies that $\delta(v)=-2$. If $v\in U_3$,
then $\gamma(v)=0$ and~$\delta2$ implies that $\delta(v)=-5$. This
confirms~\eqref{eq:discvertex} if $d(v)=3$.

Next suppose that $4\le d(v)\le\Delta-2$. Recall that $\alpha(v)=3d(v)-6$,
and observe that $\beta(v)=0$. If $v\in T$, then $\gamma(v)=-3$
by~$\gamma1$. By~$\delta3$ we have
$\delta(v)=(d(v)-1)\cdot\Bigl(-3+\dfrac{6}{d(v)-1}\Bigr)=9-3d(v)$.
Similarly, if $v\in U$, then $\gamma(v)=0$, and~$\delta4$ implies that
$\delta(v)=6-3d(v)$. This proves~\eqref{eq:discvertex} for those
vertices~$v$.

Now suppose that $d(v)\ge\Delta-1$. As a next step towards
proving~\eqref{eq:discvertex}, we consider the average value
of~$\delta_f(v)$ over the faces incident with~$v$. For convenience, set
$d'(v)=d(v)-1$ if $v\in T$, and set $d'(v)=d(v)$ if $v\in U$.

\begin{claim}\label{claim:avg}
  \ If $d(v)\ge \Delta-1$, then
  \[\sum_{i=1}^{d'(v)}\delta_{f_i}(v)\ge -\tfrac{5}{2}\cdot d'(v).\]
\end{claim}

\begin{proof}
  To obtain the desired bound, we group some of the faces around~$v$ into disjoint consecutive triples based on how~$\delta5$ applies to them with respect to $v$.
  Let~$J$ be the set of indices $j\in\{1,2,\dotsc,d'(v)\}$ such that~$\delta5$ applies to~$f_j$ with respect to $v$.
  The definition of~$\delta5$ precludes the possibility that~$\delta5$ applies to two consecutive faces around~$v$.
  Let~$K$ be any maximal set of indices $k\in\{1,2,\dotsc,d'(v)\}$ such that (modulo~$d'(v)$) both $k-1$ and $k+1$ are in $J$ and neither $k-2$ nor $k+2$ are in $K$.
  Let~$J_K = J \setminus \bigcup_{k\in K}\{k-1,k+1\}$.
  To define the triples, each face with index in $K\cup J_K$ is grouped with the two faces neighbouring it around~$v$.
  Note that by the maximality of $K$ these triples are all pairwise disjoint.
  If a face $f_i$ around~$v$ is not in a triple, then by $\delta6$\,--\,$\delta10$ we know that $\delta_{f_i}(v)\ge-\tfrac{5}{2}$. It follows that
  \[\sum_{i=1}^{d'(v)}\delta_{f_i}(v)\ge \sum_{j\in K\cup J_K}\bigl(\delta_{f_{j-1}}(v)+\delta_{f_{j}}(v)+\delta_{f_{j+1}}(v)\bigr)-\tfrac{5}{2}\bigl(d'(v)-3|K\cup J_K|\bigr),\]
  where the computation of indices is modulo~$d'(v)$ in $\{1,\dotsc,d'(v)\}$.
  Observe that for every index $j\in K$ only~$\delta10$ may apply to $f_j$ with respect to $v$ (by using~\eqref{eq:sum1} together with the assumption on $\Delta$ to exclude $\delta6$\,--\,$\delta8$, as well as a brief inspection of~$\delta9$), meaning that~$\delta_{f_j}(v)$ is~$-\tfrac{3}{2}$.
  Moreover, for every index $j\in J_K$ we see that~$\delta_{f_{j-1}}(v)$ and~$\delta_{f_{j+1}}(v)$ are both at least~$-\tfrac{9}{4}$, since~$\delta_6$ does not apply to~$f_{j-1}$ or~$f_{j+1}$ with respect to $v$ (and the same indeed is also the case for~$\delta_7$).
  We conclude for every index $j\in K\cup J_K$ that
  \[\delta_{f_{j-1}}(v)+\delta_{f_{j}}(v)+\delta_{f_{j+1}}(v)\ge-\tfrac{15}{2}.\]
  Hence we have in total
  \[\sum_{i=1}^{d'(v)}\delta_{f_i}(v)\ge
  -\tfrac{15}{2}|K\cup J_K|-\tfrac{5}{2}\bigl(d'(v)-3|K\cup J_K|\bigr)=
  -\tfrac{5}{2}d'(v).\qedhere\]
\end{proof}

\noindent
Claim~\ref{claim:avg} allows us to finish our analysis of the vertices.

First suppose that $d(v)=\Delta-1$. Then $\alpha(v)=3\Delta-9$ and
$\beta(v)=0$. If $v\in T$, then $\gamma(v)=-3$ and Claim~\ref{claim:avg}
gives $\delta(v)\ge-\tfrac{5}{2}(\Delta-2)$. Since $\Delta\ge14$,
inequality~\eqref{eq:discvertex} follows. If $v\in U$, then $\gamma(v)=0$
and $\delta(v)\ge-\tfrac{5}{2}(\Delta-1)$. The hypothesis that
$\Delta\ge13$ guarantees that~\eqref{eq:discvertex} is valid again.

Finally, suppose that $d(v)=\Delta$. Now $\alpha(v)=3\Delta-6$ and
$\beta(v)=-2$. If $v\in T$, then $\gamma(v)=-3$ and
$\delta(v)\ge-\tfrac{5}{2}\Delta+\tfrac{5}{2}$. We see
that~\eqref{eq:discvertex} holds, as $\Delta\ge17$. If $v\in U$, then
$\gamma(v)=0$ and $\delta(v)\ge-\tfrac{5}{2}\Delta$.
So~\eqref{eq:discvertex} is verified, provided that $\Delta\ge16$.

This confirms~\eqref{eq:discvertex} for all vertices
and completes the proof of the theorem.\qquad\hspace*{\fill}$\Box$

\subsection{Proof of Theorem~\ref{thm:planar2}}

Recall the notation and terminology given in the introduction of this
section.

Also this time, if~$G$ is not connected, then we extend the edge-colouring
one component at a time. The colouring of a component~$C$ with
$\Delta(C)\le22$ can be extended using the results on lines 1\,--\,3 of
Table~\ref{tab:planar}. Next, the statement of Theorem~\ref{thm:planar2} is
true for graphs with maximum degree~$23$ and exactly~$23$ edges. We use
induction on $E(G)$, and proceed with the induction step. So we may assume
that~$G$ is connected and has at least~$24$ vertices, since $\Delta\ge23$.
Let~$M$ be a precoloured distance-$3$ matching.

We first observe that
\begin{equation}\label{eq:sum3}
  \text{if $uv\in E(G)\setminus M$, then $d(u)+d(v)\ge\Delta+2$.}
\end{equation}
Indeed, suppose that the inequality does not hold for some $uv\notin M$.
Then by induction if $\Delta(G-uv)\ge23$ and by Theorem~\ref{thm:planar} if
$\Delta(G-uv)=22$, there exists an extension of~$M$ to a colouring of
$G-uv$ using the palette $\MK$. Since at most $\Delta-1$ colours are used
on the edges adjacent to~$uv$, we can easily extend the colouring further
to~$uv$. From~\eqref{eq:sum3} it follows that every vertex with degree~$1$
is incident with an edge in~$M$ and that if~$v$ has degree~$2$ and
$uv\notin M$, then $d(u)=\Delta$. In particular, if a vertex~$v$ with
degree greater than~$1$ has a neighbour in~$T_2$, then $d(v)=\Delta$.
Moreover, since edges in~$M$ are at distance at least~$4$ in~$G$, a vertex
can have at most one neighbour in $V_1\cup T_2$.

Let~$V_2'$ be the set of vertices of degree~$2$ that are not incident with
an edge of~$M$. For a face~$f$, let $V^-(f)=V(f)\setminus(V_1\cup T_2)$,
and let $W^-_f$ be the sequence of vertices on the boundary walk~$W_f$
after removing vertices from $V_1\cup T_2$. For a vertex~$v$, let
$v_1,v_2,\dotsc,v_{d(v)}$ be the neighbours of~$v$, listed in clockwise
order according to the drawing of~$G$. Write~$f_i$ for the face incident
with~$v$ lying between the edges $vv_i$ and $vv_{i+1}$ (taking addition
modulo $d(v)$ in $\{1,\dotsc,d(v)\}$).

If a vertex~$v$ has a (unique) neighbour in $V_1\cup T_2$, then we always
choose~$v_1$ to be this neighbour. In that case $f_{d(v)}=f_1$, and that
face is called~$f_1$ again. Note that it is possible for other faces to be
the same as well (if~$v$ is a cut-vertex), but we will not identify those
multiple names of the same face.

\begin{claim}\label{claim:vdeltav2}
  \ $|V_\Delta|>|V_2'|$.
\end{claim}

\begin{proof}
  Consider the set~$F$ of edges in~$E(G)$ with one end-vertex in~$V_2'$ and
  the other in~$V_\Delta$. Note that $F\cap M=\varnothing$ by the
  definition of~$V_2'$. The subgraph with vertex set $V_2'\cup V_\Delta$
  and edge set~$F$ is bipartite; we assert it is acyclic. For suppose there
  exists an (even) cycle~$C$ with $E(C)\subseteq F$. By induction if
  $\Delta(G-E(C))\ge23$, by Theorem~\ref{thm:planar} if
  $\Delta(G-E(C))=22$, and by Proposition~\ref{obs:delta+2} if
  $\Delta(G-E(C))=21$, we can extend the precolouring of~$M$ to $G-E(C)$
  using the palette~$\MK$. But then we can further extend this colouring to
  the edges of~$C$, since each one sees only $\Delta-2$ coloured edges, and
  even cycles are $2$-edge-choosable.

  Since each vertex in~$V_2'$ is incident with precisely two edges in~$F$,
  we have $|V_\Delta|+|V_2'|>|F|=2|V_2'|$. The claim follows.
\end{proof}

\noindent
We use a discharging argument to complete the proof. First, let us assign
to each vertex~$v$ a charge

\smallskip
\qitem{$\alpha1$:} $\alpha(v)=3d(v)-6$,

\smallskip\noindent
and to each face~$f$ a charge

\smallskip
\qitem{$\alpha2$:} $\alpha(f)=-6$.

\smallskip
For each vertex~$v$ we define~$\beta(v)$ as follows.

\smallskip
\qitem{$\beta1$:} If $v\in V_\Delta$, then $\beta(v)=-2$.

\smallskip
\qitem{$\beta2$:} If $v\in V_2'$, then $\beta(v)=2$.

\smallskip
\qitem{$\beta3$:} In all other cases, $\beta(v)=0$.

\smallskip
For each edge~$e=vu$, we define~$\gamma_e(v)$ and~$\gamma_e(u)$ as follows.

\smallskip
\qitem{$\gamma1$:} If $v\in V_1$, then $\gamma_e(v)=-\gamma_e(u)=3$.

\smallskip
\qitem{$\gamma2$:} If $v\in T_2$ and $u\in V_\Delta$, then
$\gamma_e(v)=-\gamma_e(u)=3$.

\smallskip
\qitem{$\gamma3$:} If $v\in U_2\setminus V_2'$ and $u\in V_\Delta$, then
$\gamma_e(v)=-\gamma_e(u)=2$.

\smallskip
\qitem{$\gamma4$:} In all other cases, $\gamma_e(v)=\gamma_e(u)=0$.

\smallskip
Finally, for each face~$f$ and vertex $v\in W^-_f$ we define $\delta_f(v)$
and $\delta_v(f)$ as follows.

\smallskip
\qitem{$\delta1$:} If $v\in U_2$, then $\delta_v(f)=-\delta_f(v)=1$.

\smallskip
\qitem{$\delta2$:} If $v\in T$ and $3\le d(v)\le\Delta-4$, then
$\delta_v(f)=-\delta_f(v)=3-\dfrac{6}{d(v)-1}$.

\smallskip
\qitem{$\delta3$:} If $v\in U$ and $3\le d(v)\le\Delta-4$, then
$\delta_v(f)=-\delta_f(v)=3-\dfrac{6}{d(v)}$.

\smallskip
\qitem{$\delta4$:} If $d(v)\ge\Delta-3$, $|V^-(f)|=3$, and both neighbours
of~$v$ in $V^-(f)$ are joined by an edge in~$M$, then
$\delta_v(f)=-\delta_f(v)=4$.

\smallskip
\qitem{$\delta5$:} If $d(v)\ge\Delta-3$ and~$v$ has a neighbour in
$V^-(f)\cap T_3$, then $\delta_v(f)=-\delta_f(v)=3$.

\smallskip
\qitem{$\delta6$:} If $d(v)\ge\Delta-3$ and none of~$\delta4$ and~$\delta5$
applies, then $\delta_v(f)=-\delta_f(v)=\tfrac52$.

\smallskip
For a vertex~$v$ and face~$f$, let $\gamma(v)$, $\delta(v)$ and $\delta(f)$
be defined as in the proof of Theorem~\ref{thm:planar}. By definition we
have $\sum_v\gamma(v)+\sum_v\delta(v)+\sum_f\delta(f)=0$. It follows from
Claim~\ref{claim:vdeltav2} that $\sum_v\beta(v)<0$. From Euler's formula we
obtain $\sum_v\alpha(v)+\sum_f\alpha(f)<0$.

Thus, in order to reach a contradiction, it is enough to show that for
every vertex~$v$:
\begin{equation}\label{eq:discvertex3}
  \alpha(v)+\beta(v)+\delta(v)+\gamma(v)\ge0,
\end{equation}
and that for every face~$f$:
\begin{equation}\label{eq:discregion3}
  \alpha(f)+\delta(f)\ge0.
\end{equation}

Let~$f$ be a face. As~$G$ is simple, $|V^-(f)|\ge3$. Since $\alpha(f)=-6$,
it follows that~\eqref{eq:discregion3} is verified if we can show that
$\delta(f)\ge6$. Let~$v$ be a vertex in $V^-(f)$ for which $\delta_v(f)$ is
minimum. If $\delta_v(f)\cdot|V^-(f)|\ge6$, then~\eqref{eq:discregion3}
clearly holds. So, by checking $\delta1$\,--\,$\delta6$, we see we only
have to consider the case where $v\in T_{[3,6]}\cup U_{[2,5]}$. (Recall
that vertices from $V_1\cup T_2$ do not appear in~$W^-_f$.)

If $v\in U_2$, then let~$u$ and~$w$ be the neighbours of~$v$. Consider
first the case where both~$u$ and~$w$ have degree~$\Delta$. Then they both
belong to $V^-(f)$, so~\eqref{eq:discregion3} follows, since
$\delta_v(f)=1$ and $\delta_u(f)\ge\tfrac52$, $\delta_w(f)\ge\tfrac52$ by
$\delta4$\,--\,$\delta6$. Suppose now that~$u$ has degree less
than~$\Delta$, which implies by~\eqref{eq:sum3} that $uv\in M$ and,
consequently, $vw\notin M$. In particular, $w\in V^-(f)$ and~$w$ has
degree~$\Delta$. Note also that necessarily $u\in V^-(f)$. If $|V^-(f)|=3$,
then $\delta_w(f)=4$ by~$\delta4$. As $\delta_u(f)\ge\delta_v(f)=1$, it
follows that~\eqref{eq:discregion3} holds. If $|V^-(f)|\ge4$, then~$u$ has
a neighbour~$u'$ in $V^-(f)\setminus\{v,w\}$. We assert that
$\delta_u(f)+\delta_{u'}(f)\ge\tfrac{5}{2}$. Indeed, because $uu'\notin M$,
we know by~\eqref{eq:sum3} and since $\Delta\ge 18$ that (at least) one
of~$u$ and~$u'$ has degree at least~$5$. Consequently, by
$\delta2$\,--\,$\delta6$ we know that
$\max\{\delta_f(u),\delta_f(u')\}\ge \frac{3}{2}$. Since
$\min\{\delta_f(u),\delta_f(u')\}\ge \delta_f(v)=1$ and
$\delta_f(w)\ge\frac{5}{2}$ by~$\delta5$\ and~$\delta6$, it follows
that~\eqref{eq:discregion3} holds.

If $v\in T_3$, then $\delta_v(f)=0$, but~$v$ has two neighbours in~$V^-(f)$
that have degree at least $\Delta-1$ each. Equation~\eqref{eq:discregion3}
then follows from~$\delta5$.

For the remaining cases we always have $\delta_v(f)\ge1$. Rules
$\delta2$\,--\,$\delta6$ ensure that any vertex $u\in V^-(f)$ with
$d(u)\ge13$ satisfies $\delta_u(f)\ge\tfrac52$; hence there can be at most
one such vertex and, in particular, a neighbour~$u$ of~$v$ in $V^-(f)$ must
have degree at most~$12$. As~$v$ itself has degree at most~$6$,
by~\eqref{eq:sum3} we have $uv\in M$, which also implies that
$\{u,v\}\subseteq U$. Hence in particular $v\in U_{[3,5]}$. Let~$w$ be the
neighbour of~$v$ in $V^-(f)\setminus\{u\}$. Since $v\in U_{[3,5]}$ and
$vw\notin M$, it necessarily holds that $d(w)\ge\Delta-3$. If $|V^-(f)|=3$,
then~\eqref{eq:discregion3} holds by~$\delta4$ since
$\delta_u(f)\ge\delta_v(f)\ge1$. If $|V^-(f)|\ge4$, then~$u$ has a
neighbour~$u'$ in $V^-(f)\setminus\{v,w\}$, which has degree at least
$\Delta+2-d(u)\ge10$. Consequently, $\delta_{u'}(f)\ge\tfrac{12}{5}$ by
$\delta3$, $\delta5$ or~$\delta6$. We deduce that~\eqref{eq:discregion3}
holds, as $\delta_w(f)\ge\tfrac{5}{2}$ by~$\delta5$ or~$\delta6$.

This confirms~\eqref{eq:discregion3} for all faces.

\medskip
Now let~$v$ be a vertex. Recall that $\alpha(v)=3d(v)-6$. Furthermore,
if~$v$ has a neighbour in $V_1\cup T_2$, then the two consecutive faces
incident with that neighbour are counted as one face; all other faces are
counted separately. Finally, as noted earlier, a vertex can have at most
one neighbour in $V_1\cup T_2$

If $d(v)=1$, then $\alpha(v)=-3$ and $\gamma(v)=3$. Since
$\beta(v)=\delta(v)=0$, we immediately obtain~\eqref{eq:discvertex3}.

If $d(v)=2$, then $\alpha(v)=0$. If $v\in T_2$, then both~$\gamma1$
and~$\gamma2$ apply; hence $\gamma(v)=0$. Again one can check that
$\beta(v)=\delta(v)=0$, confirming~\eqref{eq:discvertex3}. Otherwise
$v\in U_2$, and~$\delta1$ implies that $\delta(v)\ge-2$, as~$v$ is incident
with at most two faces. If $v\in V_2'$ as well, then~$\beta2$ yields that
$\beta(v)=2$ and $\gamma(v)=0$. If $v\notin V_2'$, then $\gamma(v)=2$ while
$\beta(v)=0$. In either case~\eqref{eq:discvertex3} follows.

Next suppose that $3\le d(v)\le\Delta-4$. Observe that $\beta(v)=0$. If
$v\in T$, then $\gamma(v)=-3$ by~$\gamma1$. Since~$v$ has a neighbour with
degree one, we know that~$v$ is incident with $d(v)-1$ regions, and
so~$\delta2$ yields that
$\delta(v)=(d(v)-1)\cdot\bigl({-3}+\dfrac{6}{d(v)-1}\bigr)= 9-3d(v)$.
Similarly, if $v\in U$, then $\gamma(v)=0$, and~$\delta3$ yields that
$\delta(v)=6-3d(v)$. This proves~\eqref{eq:discvertex3} for those
vertices~$v$.

Suppose now that $d(v)\in\{\,\Delta-3,\Delta-2,\Delta-1\,\}$. Then
$\beta(v)=0$. If $v\in T$, then $\gamma(v)=-3$ by~$\gamma1$. Since $M$ is
distance-$3$, none of~$\delta4$ and~$\delta5$ applies to~$v$, and~$v$ is
incident with $d(v)-1$ faces. From~$\delta6$ we deduce that
$\delta(v)=-\tfrac52(d(v)-1)$. Since $d(v)\ge\Delta-3\ge13$, it follows
that $3d(v)-6-3-\tfrac52(d(v)-1)=\tfrac12d(v)-\tfrac{13}2\ge0$, and
hence~\eqref{eq:discvertex3} is satisfied again. Next assume that $v\in U$,
and so $\gamma(v)=0$. The fact that~$M$ is distance-$3$ ensures
that~$\delta4$ applies to at most one face with respect to~$v$,
and~$\delta5$ applies to at most two faces with respect to~$v$.
Consequently, $\delta(v)\ge-\bigl(4+6+\tfrac52(d(v)-3)\bigr)$. Combined
with the assumption that $\Delta\ge17$, this is always enough to
satisfy~\eqref{eq:discvertex3}.

Finally, suppose that $d(v)=\Delta$. In this case $\beta(v)=-2$. If
$v\in T$, then the distance condition on~$M$ ensures that $\gamma(v)=-3$
and $\delta(v)=-\tfrac52(\Delta-1)$. Since $\Delta\ge17$ this
confirms~\eqref{eq:discvertex3}.

So we are left with the case where $v\in U$. Since~$M$ is a distance-$3$
matching, at most one of~$\gamma2$, $\gamma3$ applies and at most one of
$\delta4$,~$\delta5$ applies. Moreover, if~$\gamma2$ does apply, then
$\gamma(v)=-3$ and neither~$\delta4$ nor~$\delta5$ applies. This means that the
vertex~$v$ is incident with~$\Delta$ faces, and for each of those faces~$f$
we have $\delta_f(v)=-\frac52$. If~$\gamma2$ does not apply, then
$\gamma(v)\ge-2$. The vertex~$v$ is incident with~$\Delta$ faces, and for
$\Delta-1$ of those faces~$f$ we have $\delta_f(v)=-\frac52$. For the final
face~$f$ either $\delta4$ or~$\delta5$ may apply, so
$\delta_f(v)\in\{-4,-3,-\frac52\}$. Using that $\Delta\ge23$, we can check
that~\eqref{eq:discvertex3} is satisfied in all cases.

This confirms~\eqref{eq:discvertex3} for all vertices and completes the
proof of the theorem.\qquad\hspace*{\fill}$\Box$

\section{Avoiding Prescribed Colours on a Matching}
\label{sec:forbidden}

In this section, we show the following statement, which directly implies
Theorem~\ref{thm:forbidden}.

\begin{theorem}\label{thm:forbidden,stronger}\mbox{}\\*
  Let~$G$ be a multigraph with maximum degree $\Delta(G)$ and maximum
  multiplicity $\mu(G)$, and let~$M_1$ and~$M'$ be two disjoint
  matchings in~$G$. Suppose that each edge~$e$ of~$G$ is assigned a list
  $L(e)\subseteq[\Delta(G)+\mu(G)]$ of colours such that

  \smallskip
  \qitem{$\bullet$} $L(e)=\{1\}$ if $e\in M_1$;

  \smallskip
  \qitem{$\bullet$} $L(e)=\{2,\dots,\Delta(G)+\mu(G)\}$ if $e\in M'$; and

  \smallskip
  \qitem{$\bullet$} $L(e)=[\Delta(G)+\mu(G)]$ if
  $e\in E(G)\setminus (M_1\cup M')$.

  \smallskip\noindent
  Then there exists a proper edge-colouring~$\psi$ of~$G$ such that
  $\psi(e)\in L(e) $ for every $e\in E(G)$.
\end{theorem}

\noindent
To establish Theorem~\ref{thm:forbidden,stronger}, we use a result
mentioned just after Conjecture~\ref{conj:main}.

\begin{theorem}[Berge and Fournier~\cite{BeFo91}]\label{thm:BeFo91}\mbox{}\\*
  Let~$G$ be a multigraph with maximum degree $\Delta(G)$ and maximum
  multiplicity $\mu(G)$, and let~$M$ be a matching in~$G$. Then
  there exists a proper edge-colouring of~$G$ using the palette
  $[\Delta(G)+\mu(G)]$ such that every edge of~$M$ receives the same
  colour.
\end{theorem}

\begin{proof}[Proof of Theorem~\ref{thm:forbidden,stronger}]
  We may assume without loss of generality that $M_1$ is a maximal matching
  in $G\setminus M'$. We set
  \begin{equation*}
    B=\{\,e'\in M'\mid\text{$e'\cap e=\varnothing$ for all $e\in M_1$}\,\}.
  \end{equation*}
  Let~$\psi$ be a partial proper edge-colouring of~$G$ using colours in
  $[\Delta(G)+\mu(G)]$ such that{

  \smallskip
  \qitem{(i)} $\psi(e)=1$ for every $e\in M_1$;

  \smallskip
  \qitem{(ii)} $\psi(e')\ne1$ for every $e'\in M'$;

  \smallskip
  \qitem{(iii)} every edge of $E(G)\setminus B$ receives a colour under
  $\psi$; and

  \smallskip
  \qitem{(iv)} the number of edges of~$B$ that receive a colour under
  $\psi$ is maximal.}

  \smallskip
  To show that $\psi$ is well defined, we need to prove the existence of a
  partial proper edge-colouring of $G-B$ using the palette
  $[\Delta(G)+\mu(G)]$ that satisfies (i)\,--\,(iii).

  To this end, let $G' = G-B$. By Theorem~\ref{thm:BeFo91}, there is a
  proper edge-colouring $\phi$ of~$G'$ using colours in
  $[\Delta(G)+\mu(G)]$ such that every edge in~$M_1$ receives colour $1$.
  By the definition of~$B$, each edge in $M' \setminus B$ is incident to at
  least one edge in~$M_1$. Each edge in~$M_1$ receives colour $1$ under
  $\phi$ and therefore $\phi$ does not map any edge of $M'\setminus B$ to
  colour $1$. Thus $\phi$ ensures that $\psi$ exists.

  We now show that every edge of~$B$ receives a colour under $\psi$, which
  completes the proof. Suppose, on the contrary, that $xy \in B$ is an edge
  that is not coloured by~$\psi$. We start by making the following
  observations.

  \begin{claim}\label{clm:observation1}
    \ For every $e\in E(G)$, we have $\psi(e)=1$ if and only if $e\in M_1$.
  \end{claim}

  \noindent
  Indeed, if~$e$ is an edge that is coloured~$1$, then $e\notin M'$ and~$e$
  is not adjacent to an edge in~$M_1$, since all such edges are also
  coloured~$1$. Consequently, $e\in M_1$, as $M_1$ is a maximal
  matching of $G-M'$.

  Claim~\ref{clm:observation1} and the definition of~$B$ ensure the
  following.

  \begin{claim}\label{clm:observation2}
    \ Neither~$x$ nor~$y$ is incident with an edge that is coloured~$1$.
  \end{claim}

  \noindent
  For each vertex $v \in V(G)$ let $A_v\subseteq [\Delta(G)+\mu(G)]$ be the
  set of colours that do not appear on edges incident to~$v$.
  Claim~\ref{clm:observation2} states that $A_x$ and~$A_y$ both contain the
  colour $1$.

  \begin{claim}\label{clm:observation3}
    \ If $v\in N_G(x)\setminus \{y\}$, then $v$ is incident to an edge in
    $M_1$ and so $A_v$ does not contain the colour $1$.
  \end{claim}

  \noindent
  Indeed, for if~$v$ is not incident to an edge in~$M_1$, then by
  Claim~\ref{clm:observation2} the edge~$xv$ could be added to~$M_1$ to
  form a larger matching in $G-M'$, thereby contradicting the maximality
  of~$M_1$.

  We know that the edge~$xy$ is not yet coloured so both~$A_x$ and~$A_y$
  must contain some colour different from~$1$ and we shall from now on
  redefine~$A_y$ to be $A_y\setminus\{1\}$, which is not empty. We consider
  the following iterative procedure.

  Initially ($t=0$), we set $D_0=\{y\}$. At each step $t \ge 1$, we form
  the set~$D_t$ as follows:
  \begin{equation*}
    D_t=\textstyle\bigl\{\,v\in
    N_G(x)\setminus\bigcup\limits_{i=0}^{t-1}D_i\,\bigm| 
    \text{some edge between~$v$ and~$x$ has its colour in
      $\bigcup_{w\in D_{t-1}}\!A_w$}\,\bigr\}.
  \end{equation*}
  Since $\bigcup_{i\ge0}D_i\subseteq N_G(x)$ and $D_i\cap D_j=\varnothing$
  if $0\le i<j$, there exists a least non-negative integer~$t_0$ such that
  $D_{t_0+1}=\varnothing$. We define $D=\bigcup_{i\le t_0} D_i$. We
  consider now two cases.

  \medskip\noindent
  \textbf{Case 1.} \ Assume that there exist a vertex $w\in D$ and a colour
  $c\in A_w\cap A_x$. Since the subsets $D_0,\dotsc,D_{t_0}$ are pairwise
  disjoint, there is precisely one integer~$t_1$ such that $w\in D_{t_1}$.
  There exists a sequence $y=w_0,w_1,w_2,\dots,w_{t_1}=w$ of vertices such
  that $w_i\in D_i$ and (at least) one edge~$e_i$ between~$x$ and~$w_i$ has
  a colour in~$A_{w_{i-1}}$, whenever $1\le i\le t_1$.

  We may then define a partial proper edge-colouring~$\psi'$ of~$E(G)$,
  using colours in $[\Delta(G)+\mu(G)]$, with{

  \smallskip
  \qitem{$\bullet$} $\psi'(e)=\psi(e)$ if
  $e\notin\{\,e_i\mid1\le i\le t_1\,\}$;

  \smallskip
  \qitem{$\bullet$} $\psi'(e_i)=\psi(e_{i+1})$ for each
  $i\in\{0,\dotsc,t_1-1\}$; and

  \smallskip
  \qitem{$\bullet$} $\psi'(e_{t_1})=c$}.

  \smallskip
  One can check that $\psi'$ satisfies (i)\,--\,(iii) and colours one more
  edge of~$B$ than~$\psi$ does, which contradicts the choice of~$\psi$.

  \medskip
  For the second case, we need the following two observations.

  \begin{claim}\label{clm:observation4}
    \ For every~$z\in N_G(x)$, it holds that $\mu(G) \le |A_z|$.
  \end{claim}

  \noindent
  The only case that is not trivial is when $z=y$, due to our redefinition
  of~$A_y$. However, as the edge $xy$ is not coloured, the vertex~$y$ sees
  at most $\Delta(G)-1$ different colours, which implies the statement.

  Let~$H$ be the bipartite subgraph of~$G$ induced by the bipartition
  $(\{x\},D)$. (In particular, the edges of~$G$ between vertices in~$D$ are
  not in $H$.) The next statement follows directly from the fact that the
  number of coloured edges between~$x$ and~$y$ is at most $\mu(G)-1$.

  \begin{claim}\label{clm:observation5}
    \ The bipartite graph~$H$ contains fewer than $|D|\mu(G)$ coloured edges.
  \end{claim}

  \medskip\noindent
  We can now proceed with the second case.

  \medskip
  \noindent
  \textbf{Case 2.} \ For every vertex $w\in D$ and every colour $c\in A_w$,
  there exists an edge~$e_w$ between~$x$ and a vertex $z\in D$ such that
  $\psi(e_w)=c$. By Claims~\ref{clm:observation4}
  and~\ref{clm:observation5}, we know that the number of colours appearing
  in the bipartite graph~$H$ is less than $|D|\cdot\mu(G)$, which is at
  most $\sum_{w\in D} |A_w|$. This implies that there are two distinct
  vertices~$v_1$ and~$v_2$ in $D\subseteq N_G(x)$ with
  $A_{v_1}\cap A_{v_2}\ne\varnothing$. Let $c_1\in A_{v_1}\cap A_{v_2}$ and
  note that $c_1 \ne 1$ by Claim~\ref{clm:observation3}. Let
  $c_2 \in A_x \setminus\{1\}$. Then $c_2 \notin A_{v_1}\cup A_{v_2}$ and
  $c_1\notin A_x$. (And hence $c_1\ne c_2$.)

  For $i\in\{1,2\}$, let~$P_i$ be the maximal alternating path with colours
  $c_1$ and~$c_2$ beginning at~$v_i$. Note that~$x$ cannot belong to both
  paths. But if~$x$ does not belong to~$P_i$, then we may swap~$c_1$
  and~$c_2$ along the edges of~$P_i$. This leads us back to Case~1 because
  then $c_2$ belongs to $A_x\cap A_{v_i}$. (Note that such a swap affects
  neither the colours of the edges inside~$H$ nor those of edges in~$M_1$.)

  \medskip
  We have shown that in each case there exists a partial proper
  edge-colouring using colours in $[\Delta(G)+\mu(G)]$ and satisfying
  (i)\,--\,(iii) that assigns colours to more edges of~$B$ than $\psi$
  does, a contradiction.
\end{proof}

\section{Conclusion}
\label{sec:conclusion}

During the preparation of this manuscript, we learned of a related work in
the context of graph limits~\cite{CLP16}, in which is proposed the
following conjecture that has a similar flavour to our
Conjecture~\ref{conj:main}.

\begin{conjecture}[Cs\'oka, Lippner and
  Pikhurko~\cite{CLP16}]\label{conj:graphing}\mbox{}\\*
  Let~$G$ be a graph such that every vertex is of degree at most~$d$,
  except one of degree $d+1$. Using the palette $\MK=[d+1]$, suppose that
  at most $d-1$ pendant edges are precoloured. This precolouring can
  be extended to a proper edge-colouring of all of~$G$.
\end{conjecture}

\noindent
The authors of Conjecture~\ref{conj:graphing} proved the weaker statement
with $\MK=[d+9\sqrt{d}]$ instead of $\MK=[d+1]$.

\medskip
With respect to Question~\ref{question:main}, rather than imposing
conditions on the matching~$M$, we could instead constrain the
precolouring. In the light of Theorem~\ref{thm:counterexample} and the
result of Berge and Fournier~\cite{BeFo91}, the following is a natural
strengthened version of Conjecture~\ref{conj:main}.

\begin{conjecture}\label{conj:main2}\mbox{}\\*
  Let~$G$ be a multigraph with maximum degree $\Delta(G)$ and maximum
  multiplicity $\mu(G)$. Using the palette $\MK=[\Delta(G)+\mu(G)]$, any
  precoloured matching such that no two edges precoloured differently are
  within distance~$2$ can be extended to a proper edge-colouring of all
  of~$G$.
\end{conjecture}

\noindent
We may rephrase Theorem~\ref{thm:forbidden} in the language of list colouring
as follows: for any multigraph~$G$, any matching~$M$ in~$G$, and any list
assignment $L:E(G) \to 2^{[\Delta(G)+\mu(G)]}$ such that $|L(e)|=
\Delta(G)+\mu(G)-1$ if $e\in M$ and $L(e) = [\Delta(G)+\mu(G)]$ otherwise,
there is a proper $L$-edge-colouring of~$G$.  Theorem~\ref{thm:counterexample}
still leaves open the possibility that the following holds.

\begin{conjecture}\label{conj:main3}\mbox{}\\*
  Let~$G$ be a multigraph with maximum degree $\Delta(G)$ and maximum
  multiplicity $\mu(G)$ and let~$M$ be a matching in $G$. Let
  $L:E(G)\to2^{[\Delta(G)+\mu(G)]}$ be a list assignment such that
  $|L(e)|=2$ if $e\in M$ and $L(e) = [\Delta(G)+\mu(G)]$ otherwise. Then
  there is a proper $L$-edge-colouring of~$G$.
\end{conjecture}

\noindent
It would also be interesting if either of Conjectures~\ref{conj:main2}
and~\ref{conj:main3} could be confirmed with the constant~$2$ replaced by
any larger fixed integer.

\subsection*{Acknowledgement}

The authors would like to thank the anonymous referees for their meticulous reading
and for their corrections and suggestions, which improved the article significantly.

\bibliographystyle{abbrv}
\bibliography{edgeprecolour}

\end{document}